\newtheorem{theorem}{Theorem}[section]
\newtheorem{lemma}[theorem]{Lemma}
\newtheorem{corollary}[theorem]{Corollary}
\theoremstyle{definition}
\newtheorem{definition}[theorem]{Definition}
\newtheorem{example}[theorem]{Example}
\theoremstyle{remark}
\newtheorem{remark}[theorem]{Remark}
\numberwithin{equation}{section}
\begin{document}

\setcounter{page}{1}
\title[Short Title]{ New Generalized Geometric Difference Sequence Spaces}

\author[ Saubhagyalaxmi Singh]{Saubhagyalaxmi Singh $^1$}

\address{$^{1}$ Department of Mathematics, Centurion University of Technology and Management, Odisha, India}
\email{ssaubhagyalaxmi@gmail.com, saubhagyalaxmi.singh@cutm.ac.in }

\subjclass[2010]{Primary 46A45; Secondary 46B45, 46A35.}

\keywords{Difference sequence space, difference operator ,  Geometric sequence spaces, infinite matrix,  ${\alpha} -,\beta -$ and $\gamma -$duals.}


\begin{abstract}
In this current study, the most apparent aspect is to submit a new geometric sequence space. We investigate its topological properties , inclusion relations, Geometric statistical convergence and Geometric property of Orlicz function and . Moreover, we also check its dual properties.

\end{abstract} 
\maketitle

\section{Introduction and preliminaries}
Scientists were able to approach problems in science and engineering from a different angle, non-Newtonian calculus, which was developed between 1967 and 1970. Michael Grossman and Robert Katz explored non-Newtonian calculus, that covers many areas such as geometric, anageometric, and bigeometric calculus in \cite{Grossman}, which, by depending on multiplication rather than addition, is an opposite to the traditional calculus of Newton and Leibnitz. Geometric analysis was named as multiplicative calculus by D.Stanley  \cite{Stanley} and further it was generalised by  D.Campell \cite{Campbell}, Bashirov et al. \cite{Bashirov1} and several other researchers  \cite{Bashirov2, Florack, Aniszewska}.\\

 The traditional difference sequence spaces were first introduced by Kizmaz in 1981 $X\left(\right. \Delta \left. \right)\ ,$ for  $X=c,\, c_{0} ,\, l_{\infty } $ which are the class of convergent, null and bounded sequences respectively, defined as \\
\[X\left(\Delta \right)=\left\{\right. x=\left(x_{i} \right):\left(\Delta x_{i} \right)\in X\left. \right\}\] 
where  $\Delta x_{i} =x_{i} - x_{i+1} .$

\noindent In 1995 Et. and  \c{C}olak \cite{Et3} generalized $X\left(\Delta \right)$ to \textit{m}th order difference sequence spaces by defining $X\left(\right. \Delta ^{m} \left. \right)=\left\{\right. x=\left(x_{i} \right):\left(\right. \Delta ^{m} x_{i} \left. \right)\in X\left. \right\}\ ,$ where $\Delta ^{m} x_{i} =\Delta ^{m} \left(x_{i} - x_{i+1} \right)$.

 T\"{u}rkmen and Ba\c{s}ar \cite{Turkmen}  studied the sets of  geometric real numbers and geometric complex numbers by  $  \mathbb{R}(G)$ and $\mathbb{C}(G)$  respectively as 
 \begin{align*}
&  \mathbb{R}(G)=  \{e^{x}: x\in \mathbb{R} \} = \mathbb{R^{+}} / {0}, \\
 & \mathbb{C}(G)=  \{e^{x}: x\in \mathbb{C} \} = \mathbb{C} / {0}.
 \end{align*}

 Geometric addition $(\oplus)$, geometric subtraction $(\ominus)$ , geometric multiplication $(\odot)$ and geometric limit $_G\lim_{\eta \to \infty}$ defined in \cite{sing2,sing3,sing4, sing5}.

 Infact they introduced the geometric sequence spaces $l_{\infty } \left(G\right),c \left(G\right),c_{0} \left(G\right),l_{p} \left(G\right).$ \cite{sing2,KB1}   

Boruah et al. \cite{KB1, KB2, KB3, KB4,KB5,KB6} have studied various properties of  Geometric sequence spaces . \\ The $\textit{m}^{th}$ order geometric difference sequence spaces as
\[l_{\infty }^{G} \left(\right. \Delta _{G}^{m} \left. \right)=\left\{\right. x=\left(x_{i} \right)\ ;\; \Delta _{G}^{m} x\in l_{\infty }^{G} \left. \right\}\ ,\] 
\[c^{G} \left(\right. \Delta _{G}^{m} \left. \right)=\left\{\right. x=\left(x_{i} \right):\Delta _{G}^{m} x\in c^{G} \left. \right\}\ ,\] 
\[c_{0}^{m} \left(\right. \Delta _{G}^{m} \left. \right)=\left\{\right. x=\left(x_{i} \right):\Delta _{G}^{m} x\in c_{0}^{G} \left. \right\}\ ,\] 
respectively where $m \in \mathbb{N}$
\begin{align*}
& \Delta _{G}^{0}=  (x_i) \\
& \Delta _{G}^{1}x =(\Delta _{G}x_i)= (x_i \ominus x_{i+1}) \\
& \Delta _{G} ^ {2} x= (\Delta _{G} ^{2} x_i)=(\Delta_{G}x_i \ominus \Delta_{G}x_{i+1})\\
& =(x_i \ominus x_{i+1} \ominus  x_{i+1} \oplus  x_{k+2})\\
& =(x_i \ominus e^2 \odot x_{i+1} \oplus x_{i+1})\\
&\Delta _{G} ^ {3} x= (\Delta _{G} ^{3} x_i)=(\Delta_{G}^{2}x_i \ominus \Delta_{G}^{2}x_{i+1})\\
&=(x_i \ominus  e^{2} \odot x_{i+1} \oplus e^{3}\odot  x_{i+1} \ominus  x_{k+3})\\
&\ldots \ldots\ldots\ldots\ldots\ldots \ldots\\
&\Delta _{G} ^ {m} x= (\Delta _{G} ^{m} x_i)=(\Delta_{G}^{m-1}x_i \ominus \Delta_{G}^{m-1}x_{i+1})
\end{align*}
\begin{align} \label{eqem}
& =_G \sum _{\nu =0}^ {m} (\ominus e)^ {\nu_{G}} \odot e^{ \binom {m} {\nu}} \odot x_{k+ \nu}, \textrm {~with~} (\ominus e)^{0_G}=e. 
\end{align}
In 1968  Ces{\`a}ro sequence spaces were brought to lime light by Dutch Mathematical Society  which were of the form\\
$[C,1]=\left\{x=(x_{i})\;\in \;\omega:\displaystyle
\lim_{n \rightarrow
\infty}\frac{1}{n}\sum_{i=1}^{n}|x_{i}-\ell|=0\;\; ,\textrm{~for\;\;
some~}
\ell\right\}.$\;\;\\\
$(C,1)=\left\{x=(x_{i})\;\in \;\omega:\displaystyle
\lim_{n \rightarrow
\infty}\frac{1}{n}\sum_{i=1}^{n}(x_{i}-\ell)=0\;\; \textrm {~,for\;\;
some~}
\ell\right\}.$\;\;

Then after, in 1970, Shiue \cite{shiu1, shiu2} researched the Ces{\`a}ro function and sequence spaces. Since then, other academics have become interested in Ces{\`a}ro sequence spaces.

Leindler \cite{Tzafriri} first developed the strongly$(V,\lambda)-$ summable sequence spaces \\
$$[V,\lambda]=\left\{x=(x_{i})\;\in \;\omega:\displaystyle
\lim_{n \rightarrow
\infty}\frac{1}{\lambda_{n}}\sum_{i\;\in \;I_{n}}|x_{i}-\ell|=0\;\;
\textrm{~,for\;\; some~}\;\; \ell\right\},$$

$$(V,\lambda)=\left\{x=(x_{i})\;\in \;\omega:\displaystyle
\lim_{n \rightarrow
\infty}\frac{1}{\lambda_{n}}\sum_{i\;\in \;I_{n}}(x_{i}-\ell)=0\;\;
\textrm{~,for\;\; some~ ~} \ell\right\},$$

Where, the de la Vallee-Pousin meaning is
$t_{n}(x)=\frac{1}{\lambda_{n}}\displaystyle\sum_{i\;\in \;I_{n}}x_{i}$, 
in which the non-decreasing sequence of positive reals $(\lambda_{n})$  generally tends to infinity with \;$\lambda_{1}=1$
\;and\;$\lambda_{n+1} \leq \lambda_{n}+1$ and  $I_{n}=[n - \lambda_{n}+1,n], n=1,2,3,....$.

The reduction of the $(V,\lambda)-$ summable sequence spaces to the Ces{\`a}ro  sequences space is noticed when $\lambda_{n}=n$.

 The new geometric sequence spaces is now being introduced as follows:
\begin{align*}
 & (C,1)_G (\Delta_{G} ^{m}) =\\ & \left \{ x=(x_i) \in \omega(G) :  {~}_{G}{\mathop{\lim }\limits_{\eta \to \infty }} (e\oslash e^ \eta) \odot{~} _G\sum _{i=1}^{n} (\Delta _{G}^{m} x_{i} \ominus L)^G =1, \textrm{~for some~} L \right\}, \\
& [C,1]_G (\Delta_{G} ^{m}) =\\ & \left \{ x=(x_i) \in \omega(G) : {~}_G {\mathop{\lim }\limits_{\eta \to \infty }} (e\oslash e^ \eta) \odot{~} _G\sum _{i=1}^{n} |\Delta _{G}^{m} x_{i} \ominus L|^G =1 , \textrm{~for some~} L \right\}, \\
& (V, \lambda)_G (\Delta_{G} ^{m}) = \\ & \left \{ x=(x_i) \in \omega(G) : {~}_G {\mathop{\lim }\limits_{\eta \to \infty }} (e\oslash e^{\lambda_\eta}) \odot{~} _G\sum _{i \in I_n} (\Delta _{G}^{m} x_{i} \ominus L)^G =1 , \textrm{~for some~} L \right\}, 
\end{align*}
\begin{align*}
& [V, \lambda]_G (\Delta_{G} ^{m}) =\\ & \left \{ x=(x_i) \in \omega(G) : {~}_G {\mathop{\lim }\limits_{\eta \to \infty }} (e\oslash e^{\lambda_\eta}) \odot{~} _G\sum _{i \in I_n} |\Delta _{G}^{m} x_{i} \ominus L|^G =1 , \textrm{~for some~} L \right\}.
\end{align*}

\section{Main Results}

\subsection{ Topological Structure}

The spaces  $(C,1)_G (\Delta_{G} ^{m}), {~~}[C,1]_G (\Delta_{G} ^{m}) ,  {~~}(V, \lambda)_G (\Delta_{G} ^{m}) $ and  $[V, \lambda]_G (\Delta_{G} ^{m}) $  are Banach spaces have the given  norms
\begin{align}\label{2eq}
& ||x||_{\Delta _{G} }^{G} ={~}_G \sum _{i=1}^{n}\left|\, x_{i} \, \right| ^{G} \oplus {\mathop{\sup }\limits_{\eta \in \mathbb{N}}} \left| (e \oslash e^ \eta ) \odot {~}_G\sum _{i=1}^{n}\Delta _{G}^{m} x_i  \right|^{G} ,\\
& ||x||_{\Delta _{G} }^{G} ={~}_G \sum _{i=1}^{n}\left|\, x_{i} \, \right| ^{G} \oplus {\mathop{\sup }\limits_{\eta \in \mathbb{N}}} \left( (e \oslash e^ \eta ) \odot {~}_G\sum _{i=1}^{n}\left|\Delta _{G}^{m} x_i \right|^{G} \right) ,\\
& ||x||_{\Delta _{G} }^{G} ={~}_G \sum _{i=1}^{n}\left|\, x_{i} \, \right| ^{G} \oplus {\mathop{\sup }\limits_{\eta \in \mathbb{N}}} \left| (e\oslash e^{\lambda_\eta}) \odot {~}_G\sum _{i \in I_n} \Delta _{G}^{m} x_i \right|^{G} \\
& \textrm{~and~}  ||x||_{\Delta _{G} }^{G} ={~}_G \sum _{i=1}^{n}\left|\, x_{i} \, \right| ^{G} \oplus {\mathop{\sup }\limits_{\eta \in \mathbb{N}}} \left( (e\oslash e^{\lambda_\eta}) \odot {~}_G\sum _{i \in I_n}\left|\Delta _{G}^{m} x_i \right|^{G} \right) .
\end{align}
respectively.
\begin{theorem}
The spaces $\left(C,\; 1\right)_{G} \left(\right. \Delta _{G}^{m} \left. \right),\; \left[C,1\right]_{G} \left(\right. \Delta _{G}^{m} \left. \right),\left(V,\lambda \right)_{G} \left(\right. \Delta _{G}^{m} \left. \right)$ and $\left[V,\lambda \right]_{G} \left(\right. \Delta _{G}^{m} \left. \right)$  are linear over $\mathbb{C}(G)$.
\end{theorem}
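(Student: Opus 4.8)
The plan is to verify the two vector–space closure conditions directly; the argument is essentially the same for all four spaces, with only the modulus (strongly summable) spaces $[C,1]_{G}(\Delta_{G}^{m})$ and $[V,\lambda]_{G}(\Delta_{G}^{m})$ needing the extra input of the geometric triangle inequality. Since $\omega(G)$ is already a linear space over $\mathbb{C}(G)$ under coordinatewise $\oplus$ and $\odot$, since each of the four sets lies inside $\omega(G)$, and since each of them contains the geometric zero sequence (take $L=1$, noting $a\odot 1=1$ and $1\ominus 1=1$), it suffices to show that each set is closed under geometric linear combinations $\alpha\odot x\oplus\beta\odot y$ for $\alpha,\beta\in\mathbb{C}(G)$ and $x,y$ in the space.

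First I would record the geometric linearity of the operator $\Delta_{G}^{m}$: from the closed form \eqref{eqem}, using that $\odot$ distributes over $\oplus$ and that the scalar weights $(\ominus e)^{\nu_{G}}\odot e^{\binom{m}{\nu}}$ pull through a geometric sum, one obtains
\[
\Delta_{G}^{m}(\alpha\odot x\oplus\beta\odot y)_{i}=\alpha\odot(\Delta_{G}^{m}x)_{i}\oplus\beta\odot(\Delta_{G}^{m}y)_{i}\qquad(i\in\mathbb{N}).
\]
Next, given $x,y\in(C,1)_{G}(\Delta_{G}^{m})$ with associated geometric limits $L_{1},L_{2}$, I would propose $L:=\alpha\odot L_{1}\oplus\beta\odot L_{2}$ as the witness of membership of $\alpha\odot x\oplus\beta\odot y$, and rewrite
\[
(e\oslash e^{\eta})\odot{~}_{G}\!\sum_{i=1}^{n}\big(\Delta_{G}^{m}(\alpha\odot x\oplus\beta\odot y)_{i}\ominus L\big)^{G}
\]
using additivity of the geometric sum ${~}_{G}\!\sum$, the fact that $\odot$ factors out of ${~}_{G}\!\sum$, and linearity of the geometric limit (all from \cite{sing2,sing3,sing4,sing5}); this splits the expression as $\alpha\odot(\text{term}\to 1)\oplus\beta\odot(\text{term}\to 1)$, hence it tends geometrically to $1$. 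The proof for $(V,\lambda)_{G}(\Delta_{G}^{m})$ is identical after replacing $e^{\eta}$ by $e^{\lambda_{\eta}}$ and $\sum_{i=1}^{n}$ by $\sum_{i\in I_{n}}$.

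For $[C,1]_{G}(\Delta_{G}^{m})$ and $[V,\lambda]_{G}(\Delta_{G}^{m})$ the only modification is that, after the same splitting inside the modulus, one estimates
\[
\big|\alpha\odot(\Delta_{G}^{m}x_{i}\ominus L_{1})\oplus\beta\odot(\Delta_{G}^{m}y_{i}\ominus L_{2})\big|^{G}\le|\alpha|^{G}\odot|\Delta_{G}^{m}x_{i}\ominus L_{1}|^{G}\oplus|\beta|^{G}\odot|\Delta_{G}^{m}y_{i}\ominus L_{2}|^{G}
\]
by the geometric triangle inequality together with $|\lambda\odot a|^{G}=|\lambda|^{G}\odot|a|^{G}$, then applies the Ces{\`a}ro, respectively the $\lambda$-weighted de la Vall{\'e}e--Poussin, averaging and passes to the geometric limit. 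I expect no genuine obstacle here, only bookkeeping: keeping the geometric operations straight, justifying the interchange of the geometric limit with the finite geometric sum, and checking that the modulus estimate survives the averaging so that the limiting value remains the geometric $1$. Under the isomorphism $x\mapsto\ln x$ carrying these spaces onto the classical $(C,1),[C,1],(V,\lambda),[V,\lambda]$ and carrying $\Delta_{G}^{m}$ onto $\Delta^{m}$, each of these checks reduces to the standard linearity argument, which is the cleanest way to see why the statement must hold.
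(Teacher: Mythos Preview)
Your proposal is correct and follows essentially the same route as the paper: choose the witness $L=\alpha\odot L_{1}\oplus\beta\odot L_{2}$, invoke the geometric linearity of $\Delta_{G}^{m}$, and (for the bracketed spaces) apply the geometric triangle inequality together with $|\lambda\odot a|^{G}=|\lambda|^{G}\odot|a|^{G}$ before passing to the limit. The paper carries this out explicitly only for $[C,1]_{G}(\Delta_{G}^{m})$ and defers the remaining three cases to ``similar techniques,'' so your more careful separation of the modulus and non-modulus cases, and your remark on the $\ln$-isomorphism, are welcome elaborations rather than a different argument.
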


\begin{proof} 
Let $x=\left(x_{i} \right)\; {\rm and}\; y=\left(y_{i} \right)$ be any two elements of $\left[C,\; 1\right]_{G} \left(\right. \Delta _{G}^{m} \left. \right)$ then there exist $L\, \& \, L'$ s.t.
\[_{G}{\mathop{\lim }\limits_{\eta \to \infty }}(e \oslash e^ \eta ) \odot {~}_ G\sum _{i=1}^{n}\left|{\kern 1pt} {\kern 1pt} \Delta _{G}^{m} x_{i}{\kern 1pt} {\kern 1pt} \ominus  L\right| ^{G} =1,\] 
\[_{G}{\mathop{\lim }\limits_{\eta \to \infty }} (e \oslash e^ \eta ) \odot {~}_G\sum _{i=1}^{n}\left|{\kern 1pt} {\kern 1pt} \Delta _{G}^{m} y_{i}{\kern 1pt} {\kern 1pt} \ominus L'\right| ^{G} =1.\] 
For any two scalars $\alpha ,\beta \in \mathbb{C}(G),$ by setting $L^{''} =\alpha L\oplus \beta L',$ we have
\begin{align*}
& _{G}{\mathop{\lim }\limits_{\eta \to \infty }} (e \oslash e^ \eta ) \odot {~}_ G \sum _{i=1}^{n}\left|\Delta _{G}^{m} \left(\alpha x_{i}\oplus \beta y_{i}\right)\, \ominus L^{''}\right|^{G}  \\ 
& =_{G}{\mathop{\lim }\limits_{\eta \to \infty }} (e \oslash e^ \eta ) \odot {~}_ G\sum _{i=1}^{n}\left|\Delta _{G}^{m} \left(\alpha x_{i}\oplus \beta y_{i}\right){\kern 1pt} {\kern 1pt} {\kern 1pt} \ominus {\kern 1pt} {\kern 1pt} \left(\alpha L\oplus \beta L'\right){\kern 1pt} {\kern 1pt} \right| ^{G}\\ 
& \le \left|\alpha \right| \; _{G}{\mathop{\lim }\limits_{\eta \to \infty }}(e \oslash e^ \eta ) \odot {~}_ G\sum _{i=1}^{n}\left|\, \Delta _{G}^{m} x_{i}\ominus L\, \right| ^{G} \\ & \oplus \left|\, \beta \, \right| \; _{G}{\mathop{\lim }\limits_{\eta \to \infty }} (e \oslash e^ \eta ) \odot {~}_ G\sum _{i=1}^{n}\, \left|\, \Delta _{G}^{m} y_{i}\ominus L'\, \right| ^{G} \\ 
& {\mathop{\to }\limits^{G}}1 \; {\rm as}\; \eta \to \infty \\
& \Rightarrow \alpha x_{i} \oplus \beta y_{i}\in \left[C,\; 1\right]^{G} \left(\right. \Delta _{G}^{m} \left. \right)\ .
\end{align*} 
For other spaces the results  follow using  similar techniques.

\end{proof}

\begin{theorem} \label{thm2.2}
The space $\left(C,\; 1\right)_{G} \left(\right. \Delta _{G}^{m} \left. \right)$  is a normed linear space w.r.t the norm 
$$||x||_{\Delta _{G} }^{G} ={~}_G \sum _{i=1}^{n}\left|\, x_{i} \, \right| ^{G} \oplus {\mathop{\sup }\limits_{\eta \in \mathbb{N}}} \left| (e \oslash e^ \eta ) \odot {~}_G\sum _{i=1}^{n}\Delta _{G}^{m} x_{i}\,  \right|^{G} .$$
\end{theorem}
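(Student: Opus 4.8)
The plan is to verify the three norm axioms for $\|\cdot\|_{\Delta_G}^G$ on $(C,1)_G(\Delta_G^m)$ directly, working throughout in the geometric (multiplicative) setting, where the role of $0$ is played by $e=e^0=1$ and the role of the ordinary absolute value by the geometric modulus $|\cdot|^G$. First I would record that the functional is well defined and finite on the space: the first summand $_G\sum_{i=1}^n|x_i|^G$ is a fixed (finite) geometric sum, and the supremum term is finite because membership in $(C,1)_G(\Delta_G^m)$ forces the geometric Ces\`aro averages $(e\oslash e^\eta)\odot{}_G\sum_{i=1}^n\Delta_G^m x_i$ to converge geometrically to some $L$, hence to form a geometrically bounded set indexed by $\eta$.

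Next I would check \emph{positive definiteness}. Since $|x_i|^G\succeq e$ and the supremum term is $\succeq e$, we always have $\|x\|_{\Delta_G}^G\succeq e$; and $\|x\|_{\Delta_G}^G=e$ forces every $|x_i|^G=e$, i.e. $x_i=e$ for all $i$, so $x$ is the geometric zero sequence. For \emph{homogeneity}, given a scalar $\alpha\in\mathbb{C}(G)$ one uses $|\alpha\odot x_i|^G=|\alpha|^G\odot|x_i|^G$ together with the linearity of $\Delta_G^m$ and of the geometric finite sum, plus the fact that $|\alpha|^G$ can be pulled out of both the geometric sum and the geometric supremum, to get $\|\alpha\odot x\|_{\Delta_G}^G=|\alpha|^G\odot\|x\|_{\Delta_G}^G$. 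For the \emph{triangle inequality}, I would apply the geometric triangle inequality $|a\oplus b|^G\preceq|a|^G\oplus|b|^G$ termwise in the first sum, and in the second term use that $\Delta_G^m$ is additive, that the geometric sum distributes over $\oplus$, and that $\sup_\eta(f(\eta)\oplus g(\eta))\preceq\sup_\eta f(\eta)\oplus\sup_\eta g(\eta)$, then regroup to obtain $\|x\oplus y\|_{\Delta_G}^G\preceq\|x\|_{\Delta_G}^G\oplus\|y\|_{\Delta_G}^G$. Each of these is the exact geometric transcription of the classical argument for the corresponding difference-sequence-space norm.

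The step I expect to require the most care is not any single inequality but the bookkeeping that makes the functional genuinely a \emph{norm} rather than a seminorm: one must be sure the first block $_G\sum_{i=1}^n|x_i|^G$ really is present to kill the kernel of the $\Delta_G^m$-seminorm (the operator $\Delta_G^m$ annihilates geometric ``polynomials'' of geometric-degree $<m$), and one must fix the convention for the free index $n$ in the displayed norm — reading it, as is standard in this circle of papers, as the supremum/limit over $n$ so that the expression does not depend on a dangling variable. Once that reading is pinned down, the finiteness of the supremum term for $x\in(C,1)_G(\Delta_G^m)$ follows from geometric convergence $\Rightarrow$ geometric boundedness, and the three axioms go through as above; I would close by remarking that completeness (the Banach property asserted in~\eqref{2eq} and the surrounding display) is proved separately, e.g. by the usual Cauchy-sequence argument transported to the geometric metric, and is not needed here.
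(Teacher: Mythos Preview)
Your proposal is correct and follows essentially the same route as the paper: both verify the four norm axioms (nonnegativity, definiteness, absolute homogeneity, triangle inequality) directly in the geometric setting, using the geometric triangle inequality and the linearity of $\Delta_G^m$. Your treatment is in fact a bit more careful than the paper's in flagging the well-definedness of the supremum term and the ambiguity of the free index~$n$; for the definiteness step, note that the paper uses \emph{both} summands---the first block forces $x_1=1$ (more generally the first $m$ coordinates), and the supremum block then forces $x_i=x_{i+1}$ for all $i$---so your one-line ``forces every $|x_i|^G=e$'' should be read as shorthand for that two-part argument.
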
 

\begin{proof}
Consider $x=(x_i),{~~} y=(y_k) \in \left(C,\; 1\right)_{G} \left(\right. \Delta _{G}^{m} \left. \right) ,$ , for normed linear spaces we will prove the following four conditions :
\begin{itemize}
			\item[(i)] $ ||x||_{\Delta _{G} }^{G} \geq 1$
			\item[(ii)] $ ||x||_{\Delta _{G} }^{G}=1 \Leftrightarrow x=0_{G}.$
			\item[(iii)] $||\lambda \odot x||_{\Delta _{G} }^{G} = |\lambda| \odot  ||x||_{\Delta _{G} }^{G},  \textrm{~for~} \lambda \in \mathbb{C}(G) \textrm{~and~} x \in X$,
			\item[(iv)] $|| x \oplus y||_{\Delta _{G} }^{G} \leq ||x||_{\Delta _{G} }^{G} \oplus ||y||_{\Delta _{G} }^{G},  \textrm{~for all~} x,y \in X$, $\textrm {~ (Triangle inequality)~}$
	\end{itemize}
	
\begin{itemize}
\item[(i)]  Here
\begin{align*} & |x_i|^G \geq 1 \textrm{~and~}_G\sum _{i=1}^{n}|\Delta _{G}^{m} x_{i}|^{G} \geq 1.\\
& \textrm{~Consider~} ||x||_{\Delta _{G} }^{G} ={~}_G \sum _{i=1}^{n}\left|\, x_{i} \, \right| ^{G} \oplus {\mathop{\sup }\limits_{\eta \in \mathbb{N}}} \left| (e \oslash e^ \eta ) \odot {~}_G\sum _{i=1}^{n}\Delta _{G}^{m} x_{i}\,  \right|^{G}\\
& = {~}_G \sum _{i=1}^{n}\left|\, x_{i} \, \right| ^{G} . {\mathop{~\sup }\limits_{\eta \in \mathbb{N}}} \left| (e \oslash e^ \eta ) \odot {~}_G\sum _{i=1}^{n}\Delta _{G}^{m} x_{i}\,  \right|^{G} \\
& \geq 1 .
\end{align*} 
\item[(ii)] \begin{align*}
&\left\| x\right\| _{\Delta _{G} }^{G} = 1  \Leftrightarrow {~~}
 _G\sum _{i=1}^{n}\left|x_{i} \right| ^{G} \oplus {\mathop{\sup }\limits_{\eta \in \mathbb{N}}} \left|\, (e \oslash e^ \eta ) \odot {~}_ G\sum _{i=1}^{n}\Delta _{G}^{m} x_{i}\,  \right|^{G} =1\\ 
& \Leftrightarrow \left|x_{1} \right|^{G} .{\mathop{\sup }\limits_{\eta \in \mathbb{N}}} \left|x_{i} \ominus x_{i+1} \right|^ G=1,\; \; \; \forall i\\
&\Leftrightarrow \left|x_{1} \right|^{G} =1\; {\rm and}\; \left|x_{i} \ominus x_{i+1} \right|^{G} =1
\end{align*}
\begin{align*}
& \Leftrightarrow x_{1} =1\; {\rm and}\; x_{i} \ominus x_{i+1} =1,\; \forall i\\ 
&\Leftrightarrow x_{1} =1\; {\rm and}\; x_{i} /x_{i+1} =1,\; \forall i \\
 &\Leftrightarrow x_{1} =1\; {\rm and}\; x_{i} =x_{i+1} ,\; \forall i\\ 
&\Leftrightarrow x_{i} =1,\; \forall i \\
& \Leftrightarrow x = (1, 1, 1,1,........)= 1_G.
\end{align*}
\item[(iii)]    For any scalar $\lambda$, consider
\begin{align*}
& \left\| {\kern 1pt} {\kern 1pt} \lambda \odot x{\kern 1pt} {\kern 1pt} \right\| _{\Delta _{G} }^{G} = {~}_G\sum _{i=1}^{n}\left|\lambda \odot x_{i} \right| ^{G} \oplus {\mathop{\sup }\limits_{m,\, n}} {\kern 1pt} {\kern 1pt} \left|{\kern 1pt} {\kern 1pt} (e \oslash e^ \eta ) \odot {~}_ G\sum _{i=1}^{n}\Delta _{G}^{m} \left(\lambda \odot x\right){\kern 1pt} {\kern 1pt}  \right|^{G} \\ 
& \le \left|\lambda \right| \odot {~}_G\sum _{i=1}^{n}\left|x_{i} \right| ^{G} \oplus \left|\lambda \right| \odot {\mathop{\sup }\limits_{m,\, n}} {\kern 1pt} {\kern 1pt} \left|(e \oslash e^ \eta ) \odot {~}_ G\sum _{i=1}^{n}\Delta _{G}^{m} x_{i}{\kern 1pt} {\kern 1pt}  \right|^{G} \\ 
& =\left|\lambda \right|  \odot \left[_G\sum _{i=1}^{n}\left|x_{i} \right|^{G} \oplus {\mathop{\sup }\limits_{\eta \in \mathbb{N}}} \left|(e \oslash e^ \eta ) \odot {~}_ G\sum _{i=1}^{n}\Delta _{G}^{m} x_{i} {\kern 1pt} {\kern 1pt} \right|^{G}  \right]\\ 
& =\left|\lambda \right| \odot  \left\| x\right\| _{\Delta _{G} }^{G} .
\end{align*}
 \item[(iv)] Suppose 
\begin{equation} 
\left\| x\right\| _{\Delta _{G} }^{G} = _G\sum _{i=1}^{n}\left|x_{1} \right| ^{G} \oplus {\mathop{\sup }\limits_{\eta \in \mathbb{N}}} \left| (e \oslash e^ \eta ) \odot {~}_ G\sum _{i=1}^{n}\Delta _{G}^{m} x_{i}{\kern 1pt} {\kern 1pt}  \right|^{G}
\end{equation} 
and
\begin{equation} 
\left\| y\right\| _{\Delta _{G} }^{G} =_G\sum _{i=1}^{n}\left|y_{i} \right| ^{G} \oplus {\mathop{\sup }\limits_{m,\, n}} \left| (e \oslash e^ \eta ) \odot {~}_ G\sum _{i=1}^{n}\Delta _{G}^{m} x_{i}  \right|^{G}
\end{equation} 

Consider
\begin{align*}
& \left\| x\oplus y\right\| _{\Delta _{G} }^{G} =_G\sum _{i=1}^{n}\left|x_{i} \oplus y_{i} \right| ^{G} \oplus {\mathop{\sup }\limits_{\eta \in \mathbb{N}}} \left|(e \oslash e^ \eta ) \odot {~}_ G\sum _{i=1}^{n}\Delta _{G}^{m} \left(x\oplus y\right) {\kern 1pt} {\kern 1pt} \right|^{G} \\
&  \le \left( _G\sum _{i=1}^{n}\left|x_{i} \right|^G \oplus {\mathop{\sup }\limits_{\eta \in \mathbb{N}}} {\kern 1pt} {\kern 1pt} \left|(e \oslash e^ \eta ) \odot {~}_ G\sum _{i=1}^{n}\Delta _{G}^{m} x_{i}{\kern 1pt} {\kern 1pt}  \right|^{G}\right) \oplus\\ & \left( _G\sum _{i=1}^{n}\left|y_{i} \right| ^{G} \oplus {\mathop{\sup }\limits_{\eta \in \mathbb{N}}} {\kern 1pt} {\kern 1pt} \left|{\kern 1pt} {\kern 1pt} (e \oslash e^ \eta ) \odot {~}_ G\sum _{i=1}^{n}\Delta _{G}^{m} \left(y\right){\kern 1pt} {\kern 1pt}  \right|^{G} \right) \\
&= \left\| x\right\| _{\Delta _{G} }^{G} \oplus \left\| y\right\| _{\Delta _{G} }^{G} .
\end{align*}

\end{itemize}

\end{proof}

\begin{theorem} \label{thm2.3}
The space $\left(C,\; 1\right)_{G} \left(\right. \Delta _{G}^{m} \left. \right)$ is a Banach space  w.r.t. the norm 
$$||x||_{\Delta _{G} }^{G} ={~}_G \sum _{i=1}^{n}\left|\, x_{i} \, \right| ^{G} \oplus {\mathop{\sup }\limits_{\eta \in \mathbb{N}}} \left( (e \oslash e^ \eta ) \odot {~}_G\sum _{i=1}^{n}|\Delta _{G}^{m} x_{i}|^{G}\,  \right) .$$
\end{theorem}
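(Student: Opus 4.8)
The plan is to build on Theorem \ref{thm2.2}: the same four-condition verification carried out there (now with the absolute values placed \emph{inside} the geometric partial sums, as in the present statement) shows that $(C,1)_G(\Delta_G^m)$ is a normed linear space, so the only thing left to prove is completeness. For that I would run the standard Cauchy-sequence argument, exploiting the fact that the scalar field $\mathbb{C}(G)$ is geometrically complete.

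First I would fix an arbitrary Cauchy sequence $\left(x^{(s)}\right)_{s\in\mathbb{N}}$ in $(C,1)_G(\Delta_G^m)$, with $x^{(s)}=\left(x_i^{(s)}\right)_{i\in\mathbb{N}}$. From the shape of $\|\cdot\|_{\Delta_G}^G$ one has, for each fixed index $i$, the estimate $\left|x_i^{(s)}\ominus x_i^{(t)}\right|^G\le\left\|x^{(s)}\ominus x^{(t)}\right\|_{\Delta_G}^G$, so every coordinate sequence $\left(x_i^{(s)}\right)_s$ is Cauchy in $\mathbb{C}(G)$ and hence converges geometrically to some $x_i$; set $x=(x_i)$. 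The next step is to show $x^{(s)}\to x$ in the norm $\|\cdot\|_{\Delta_G}^G$: for a prescribed geometric tolerance choose $N$ so that $\left\|x^{(s)}\ominus x^{(t)}\right\|_{\Delta_G}^G$ is small for all $s,t\ge N$, then fix $s\ge N$, let $t\to\infty$, and pass to the limit both inside the finite partial sums ${}_G\sum_{i=1}^{n}\left|x_i\right|^G$ and, for each fixed $\eta$, inside the geometric supremum over $\eta$, using continuity of $\oplus,\ominus,\odot$, to conclude that $\left\|x^{(s)}\ominus x\right\|_{\Delta_G}^G$ is small.

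Finally I would check that $x\in(C,1)_G(\Delta_G^m)$: each $x^{(s)}$ carries a geometric Cesàro limit scalar $L^{(s)}$, and a short estimate (again using the norm inequality) shows $\left(L^{(s)}\right)_s$ is Cauchy in $\mathbb{C}(G)$, so $L^{(s)}\to L$ for some $L$. An $\varepsilon/3$-type splitting of $(e\oslash e^{\eta})\odot{}_G\sum_{i=1}^{n}\left(\Delta_G^m x_i\ominus L\right)$ into the contribution coming from $\Delta_G^m\!\left(x_i\ominus x_i^{(s)}\right)$, the contribution $(e\oslash e^{\eta})\odot{}_G\sum_{i=1}^{n}\left(\Delta_G^m x_i^{(s)}\ominus L^{(s)}\right)$, and the contribution $L^{(s)}\ominus L$ then forces the defining geometric limit of $x$ to equal $1$ with this $L$, so $x$ lies in the space and $(C,1)_G(\Delta_G^m)$ is Banach.

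The step I expect to be the main obstacle is this last one — confirming that the norm-limit $x$ genuinely belongs to $(C,1)_G(\Delta_G^m)$ rather than to some strictly larger sequence space. The delicate point is to correctly identify the candidate limit scalar as $L=\lim_{s}L^{(s)}$ and to control the interaction between the geometric supremum appearing in $\|\cdot\|_{\Delta_G}^G$ and the geometric Cesàro limit appearing in the definition of the space; the remaining linearity and absolute-value manipulations are routine, being exactly those already used in the proof of Theorem \ref{thm2.2}. As an alternative route one may note that $x=(x_i)\mapsto(\ln x_i)_i$ is a surjective geometric isometry from $(C,1)_G(\Delta_G^m)$ onto the classical Cesàro difference sequence space, which is known to be Banach, and transport completeness back along this isomorphism.
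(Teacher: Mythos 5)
Your proposal is correct and follows essentially the same route as the paper: coordinate-wise Cauchy estimates, completeness of $\mathbb{C}(G)$ to produce the candidate limit $x=(x_i)$, norm convergence by letting the second index tend to infinity, and finally a splitting argument to show $x$ belongs to the space. The one place where you genuinely diverge is the step you yourself flagged as the main obstacle. The paper's membership verification only decomposes $\Delta_G^m x_i$ as $\Delta_G^m x_i^N \oplus \Delta_G^m\bigl(x_i \ominus x_i^N\bigr)$ and then asserts $x\in (C,1)_G(\Delta_G^m)$ because $x^N$ is; it never produces the Ces\`aro limit scalar $L$ that the definition of the space requires. Your version --- extracting the scalars $L^{(s)}$, showing they form a Cauchy sequence in $\mathbb{C}(G)$ with limit $L$, and running an $\varepsilon/3$ splitting against that $L$ --- supplies exactly the missing ingredient, so on this point your argument is more complete than the printed one. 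Your alternative route via the logarithm $x\mapsto(\ln x_i)$, transporting completeness from the classical Ces\`aro difference space, is not used in the paper at all; it is arguably the cleanest proof available here, since all the geometric operations are definitionally conjugates of the ordinary ones under $\exp$, but it relies on citing completeness of the classical space rather than proving it. One minor caution: the norm displayed in the theorem statement (absolute values inside the geometric sum) is the one the paper associates with $[C,1]_G(\Delta_G^m)$, not $(C,1)_G(\Delta_G^m)$; this appears to be a typo in the paper, and your argument is insensitive to which of the two norms is used.
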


\begin{proof}
 Let $(x^{n}) $ is a Cauchy sequence in $\left(C,\; 1\right)_{G} \left(\right. \Delta _{G}^{m} \left. \right)\ ,$ where $x^{n} =\left(x_{i}^{(n)} \right)= \left(x_{1}^{(n)}, x_{2}^{(n)}, x_{3}^{(n)}, \ldots\ldots \right)$ for $\eta \in \mathbb{N}$ and $x_{j}^{(n)}$ is the $jth$ co-ordinate of $x^{n}$.

\noindent Then
\begin{align*}
& \left\| {\kern 1pt} {\kern 1pt} x^{n} \ominus x^{m} \right\| _{\Delta _{G} }^{G} {\mathop{\to }\limits^{G}} 1\; {\rm as}\; m,\; \eta \to \infty ,\\
& i.e. {~~}_G \sum _{i=1}^{n}{\kern 1pt} {\kern 1pt} \left|x_{i}^{n} \ominus x_{i}^{m} \right| ^{G} \oplus {\mathop{\sup }\limits_{\eta \in \mathbb{N}}} {\kern 1pt} {\kern 1pt} \left|(e \oslash e^ \eta ) \odot {~}_G \sum _{i=1}^{n}\Delta _{G}^{m} \left(\right. x_{i}^{n} \ominus x_{i}^{m} \left. \right) {\kern 1pt} {\kern 1pt} \right|^{G} {\mathop{\to }\limits^{G}} 1\\ 
& i.e. {~}_G \sum _{i=1}^{n}\left|x_{i}^{n} \ominus x_{i}^{m} \right| ^{G} \cdot {\mathop{\sup }\limits_{\eta \in \mathbb{N}}} \left|{\kern 1pt} {\kern 1pt} (e \oslash e^ \eta ) \odot {~}_G \sum _{i=1}^{n}\Delta  _{G}^{m} \left(\right. x_{i}^{n} \ominus x_{i}^{m} \left. \right){\kern 1pt} {\kern 1pt} \right|^{G} {\mathop{\to }\limits^{G}} 1\\
& i.e. {~ ~}_G \sum _{i=1}^{n}\left|{\kern 1pt} {\kern 1pt} x_{i}^{n} \ominus x_{i}^{m} {\kern 1pt} {\kern 1pt} \right| ^{G} {\mathop{\to }\limits^{G}} 1 {\rm ~and~}\; {\mathop{\sup }\limits_{\eta \in \mathbb{N}}} {\kern 1pt} {\kern 1pt} \left|{\kern 1pt} {\kern 1pt} (e \oslash e^ \eta ) \odot {~}_G \sum _{i=1}^{n}\Delta _{G}^{m} \left(\right. x_{i}^{n} \ominus x_{i}^{m} \left. \right) {\kern 1pt} {\kern 1pt} \right|^{G} {\mathop{\to }\limits^{G}} 1.
\end{align*}
 Hence we get $$ \left|x^{n} \ominus x^{m} \right|^{G} {\mathop{\to }\limits^{G}} 1 {~\rm as~} m,\; n{\mathop{\to }\limits^{G}} \infty .$$
Since  $\mathbb{C}\left(G\right)$ is complete,  $(x_{i}^{(n)}) =\left(x_{i}^{(1)}, x_{i}^{(2)}, x_{i}^{(3)}, \ldots\ldots \right)$ is a Cauchy sequence. Hence by  completeness of $\mathbb{C}(G)$, $(x_{i}^{(n)})$ converges to $x_{i}$ .\\ Since $(x^{n}) $ is a Cauchy sequence so for each $\varepsilon>1$, $\exists$  $N=N(\varepsilon)$ s.t.  $ \left\| {\kern 1pt} {\kern 1pt} x^{n} \ominus x^{m} \right\| _{\Delta _{G} }^{G} < \varepsilon$ $\forall$ $m,n \geq N.$\\
Hence we get \\
$_G \sum _{i=1}^{n} \left|x_{i}^{n} \ominus x_{i}^{m} \right| ^{G} < \varepsilon$ and ${\mathop{\sup }\limits_{\eta \in \mathbb{N}}} \left| (e \oslash e^ \eta ) \odot {~}_G \sum _{i=1}^{n}\Delta  _{G}^{m} \left(\right. x_{i}^{n} \ominus x_{i}^{m} \left. \right) \right|^{G} < \varepsilon$ \\ for all $k \in \mathbb{N}$ and $m,n, \geq N. $ So we have
  \begin{align*}
& \left\|  x^{n} \ominus x^{m} \right\| _{\Delta _{G} }^{G}\\
& = _G {\mathop{\lim} \limits_{m \to \infty}}  _G \sum _{i=1}^{n} \left|x_{i}^{n} \ominus x_{i}^{m} \right| ^{G}  \oplus _G {\mathop{\lim} \limits_{m \to \infty}} \mathop{\sup }\limits_{\eta \in \mathbb{N}} \left| (e \oslash e^ \eta ) \odot {~}_G \sum _{i=1}^{n}\Delta  _{G}^{m} \left(\right. x_{i}^{n} \ominus x_{i}^{m}) \right|^{G}\\
& = \varepsilon \oplus \varepsilon \\
 &= \varepsilon^2 .
\end{align*}
Hence the sequence $(x^{n})$ converges of the sequence $x=(x_{i}).$\\
Here only we show that the sequence $x=(x_{i}) \in \left(C,\; 1\right)_{G} \left(\right. \Delta _{G}^{m} \left. \right)\ .$ So we consider 
\begin{align*}
& \left|(e \oslash e^ \eta ) \odot {~}_ G\sum _{i=1}^{n}\Delta _{G}^{m}  x_{i} \right|^{G} \\
 &= \left|(e \oslash e^ \eta ) \odot {~}_ G\sum _{i=1}^{n}\Delta _{G}^{m} \left(\right. x_{i} \oplus x_{i}^{N} \ominus x_{i}^{N} \left. \right)\,  \right|^{G} \\
 & \leq  \left|(e \oslash e^ \eta ) \odot {~}_ G\sum _{i=1}^{n}\Delta _{G}^{m}  x_{i}^{N} \right| ^{G}  \oplus  \left|(e \oslash e^ \eta ) \odot {~}_ G\sum _{i=1}^{n}\Delta _{G}^{m}(x_{i}^{N} \ominus x_{i} \left. \right)\,  \right|^{G} .
\end{align*} 

From the inequalities\\ $_G \sum _{i=1}^{n} \left|x_{i}^{n} \ominus x_{i}^{m} \right| ^{G} < \varepsilon$ and ${\mathop{\sup }\limits_{\eta \in \mathbb{N}}} \left| (e \oslash e^ \eta ) \odot {~}_G \sum _{i=1}^{n}\Delta  _{G}^{m} \left(\right. x_{i}^{n} \ominus x_{i}^{m} \left. \right) \right|^{G} < \varepsilon,$  hence the sequence $(x_{i}^{N}) \in \left(C,\; 1\right)_{G} \left(\right. \Delta _{G}^{m} \left. \right),$ hence 
 $(x_{i}) \in \left(C,\; 1\right)_{G} \left(\right. \Delta _{G}^{m} \left. \right).$ Therefore,  the space $\left(C,\; 1\right)_{G} \left(\right. \Delta _{G}^{m} \left. \right)$ is a Banach space.
\\Hence Proved.
\end{proof}
\begin{remark}
 The spaces $[C,1]_G (\Delta_{G} ^{m}) ,  {~~}(V, \lambda)_G (\Delta_{G} ^{m}) $ and  $[V, \lambda]_G (\Delta_{G} ^{m})$ are Banach spaces with their corresponding norms (\ref{2eq}) using the similar techniques as in  theorem \ref{thm2.3}.
\end{remark}
\begin{theorem}
\begin{itemize}
\item[(i)] $ [C,\; 1]_{G} (\Delta _{G}^{m}) \subset (C,1)_{G} ( \Delta _{G}^{m})$
 \item[(ii)] $ \left[V,\lambda \right]_{G} \left(\right. \Delta _{G}^{m} \left. \right)\subset \left(V,\; \lambda \right)_{G} \left(\right. \Delta _{G}^{m} \left. \right)$ 
 \end{itemize} and the inclusions are strict.
\end{theorem}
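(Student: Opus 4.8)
The plan is to derive both inclusions at once from the geometric triangle inequality together with the geometric squeeze principle for ${}_G\lim$, and then to establish strictness by transporting the classical counterexample $\bigl((-1)^{i}\bigr)$ through the exponential identification of $\omega(G)$ with $\omega$.

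For part (i), let $x=(x_{i})\in[C,1]_{G}(\Delta_{G}^{m})$, so there is an $L$ with
\[
{}_{G}\lim_{\eta\to\infty}(e\oslash e^{\eta})\odot{}_{G}\sum_{i=1}^{n}\bigl|\Delta_{G}^{m}x_{i}\ominus L\bigr|^{G}=1 .
\]
The decisive step is the geometric triangle inequality for a finite geometric sum,
\[
\Bigl|{}_{G}\sum_{i=1}^{n}\bigl(\Delta_{G}^{m}x_{i}\ominus L\bigr)\Bigr|^{G}\;\le\;{}_{G}\sum_{i=1}^{n}\bigl|\Delta_{G}^{m}x_{i}\ominus L\bigr|^{G},
\]
obtained by induction on $n$ from the two–term inequality $|a\oplus b|^{G}\le|a|^{G}\oplus|b|^{G}$. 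Since $e\oslash e^{\eta}$ is a positive geometric scalar, geometric multiplication preserves this inequality, so for every $\eta$,
\[
1\;\le\;(e\oslash e^{\eta})\odot\Bigl|{}_{G}\sum_{i=1}^{n}\bigl(\Delta_{G}^{m}x_{i}\ominus L\bigr)\Bigr|^{G}\;\le\;(e\oslash e^{\eta})\odot{}_{G}\sum_{i=1}^{n}\bigl|\Delta_{G}^{m}x_{i}\ominus L\bigr|^{G}.
\]
Letting $\eta\to\infty$ and applying the geometric squeeze theorem forces ${}_{G}\lim_{\eta\to\infty}(e\oslash e^{\eta})\odot{}_{G}\sum_{i=1}^{n}\bigl(\Delta_{G}^{m}x_{i}\ominus L\bigr)^{G}=1$, i.e. $x\in(C,1)_{G}(\Delta_{G}^{m})$. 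Part (ii) is proved verbatim after replacing the index block $\{1,\dots,n\}$ by $I_{n}$ and the normalising factor $e\oslash e^{\eta}$ by $e\oslash e^{\lambda_{\eta}}$.

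For strictness I would use the identification $\log\colon\omega(G)\to\omega$ under which $\oplus,\ominus,\odot,{}_{G}\lim$ become ordinary $+,-,\cdot,\lim$, the operator $\Delta_{G}^{m}$ becomes the classical operator $\Delta^{m}$, and the geometric spaces become the ordinary Ces\`aro and $(V,\lambda)$ spaces. Classically $[C,1](\Delta^{m})\subsetneq(C,1)(\Delta^{m})$ and $[V,\lambda](\Delta^{m})\subsetneq(V,\lambda)(\Delta^{m})$ are strict, witnessed by any $y$ with $\Delta^{m}y_{i}=(-1)^{i}$: indeed $\frac{1}{n}\sum_{i=1}^{n}(-1)^{i}\to0$ and $\frac{1}{\lambda_{n}}\sum_{i\in I_{n}}(-1)^{i}\to0$, whereas $\frac{1}{n}\sum_{i=1}^{n}|(-1)^{i}|=1$ and $\frac{1}{\lambda_{n}}\sum_{i\in I_{n}}|(-1)^{i}|=1$. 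Transporting this back, I take $x=(x_{i})\in\omega(G)$ with $\Delta_{G}^{m}x_{i}=e^{(-1)^{i}}$; such $x$ exists because $\Delta_{G}^{m}$ is onto $\omega(G)$, one solves $\Delta_{G}v=w$ coordinatewise by choosing $v_{1}$ arbitrary and $v_{i+1}=v_{i}\ominus w_{i}$, then iterates $m$ times. This $x$ lies in $(C,1)_{G}(\Delta_{G}^{m})\setminus[C,1]_{G}(\Delta_{G}^{m})$ and in $(V,\lambda)_{G}(\Delta_{G}^{m})\setminus[V,\lambda]_{G}(\Delta_{G}^{m})$, proving both inclusions strict.

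The inclusion direction is immediate once the geometric triangle inequality is in hand; the delicate part is the strictness, namely confirming that $\bigl(e^{(-1)^{i}}\bigr)$ really has convergent geometric Ces\`aro and $(V,\lambda)$ means while its geometric absolute means stay equal to $e\ne 1_{G}$, and spelling out the surjectivity of $\Delta_{G}^{m}$ so that a genuine sequence $x$ of the ambient space realises this behaviour. If one prefers to avoid the $\log$–identification, every step can be carried out directly with the geometric operations, at the cost of longer bookkeeping.
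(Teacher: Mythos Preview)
Your inclusion argument is correct and is essentially what the paper does, only more carefully written: the paper's proof of (i) simply writes
\[
{}_{G}\lim_{\eta\to\infty}(e\oslash e^{\eta})\odot{}_{G}\sum_{i=1}^{n}\bigl|\Delta_{G}^{m}x_{i}\ominus L\bigr|^{G}=1
\ \Rightarrow\
{}_{G}\lim_{\eta\to\infty}(e\oslash e^{\eta})\odot{}_{G}\sum_{i=1}^{n}\bigl(\Delta_{G}^{m}x_{i}\ominus L\bigr)^{G}=1
\]
without justification, and treats (ii) by ``similarly''. You have supplied the missing reasoning (geometric triangle inequality plus squeeze), so on this part your proof is the same idea, just made rigorous.

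Where you genuinely add content is strictness. The paper's proof \emph{does not address strictness at all}; it proves only the inclusion and stops. Your counterexample via a sequence $x$ with $\Delta_{G}^{m}x_{i}=e^{(-1)^{i}}$, transported through the $\log$ identification from the classical $\Delta^{m}y_{i}=(-1)^{i}$, is correct: the partial sums $\sum_{i\le n}(-1)^{i}$ and $\sum_{i\in I_{n}}(-1)^{i}$ are bounded, so their normalised versions tend to $0$ (geometrically, to $1$), while for any $L$ the averages $\tfrac{1}{\lambda_{n}}\sum_{i\in I_{n}}|(-1)^{i}-L|$ tend to $\tfrac{1}{2}(|1-L|+|1+L|)\ge 1>0$, and likewise for the Ces\`aro case. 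Your remark that $\Delta_{G}$ (hence $\Delta_{G}^{m}$) is surjective on $\omega(G)$ via $v_{i+1}=v_{i}\ominus w_{i}$ is exactly what is needed to realise such an $x$. So your proof is strictly more complete than the paper's.
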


\begin{proof}
 (i) Suppose $x\in \left[C,\; 1\right]_{G} \left(\right. \Delta _{G}^{m} \left. \right)$  then there exists a $L\in \mathbb{C}\left(G\right)$ s.t.
 \begin{align*}
& _{G}{\mathop{\lim }\limits_{\eta \to \infty }} (e \oslash e^ \eta ) \odot {~}_G \sum _{i=1}^{n}\left|\Delta _{G}^{m} \left(x_{i} \right){\kern 1pt} {\kern 1pt} \ominus L\right| ^{G} =1\\
& \Rightarrow _{G}{\mathop{\lim }\limits_{\eta \to \infty }} (e \oslash e^ \eta ) \odot {~}_G \sum _{i=1}^{n}\left(\right. \Delta _{G}^{m} \left(x_{i} \right){\kern 1pt} {\kern 1pt} \ominus L\left. \right) ^{G} =1.
\end{align*} 
So it means $x\in \left(C,1\right)_{G} \left(\right. \Delta _{G}^{m} \left. \right)\ .$\\
Hence $ [C,\; 1]_{G} (\Delta _{G}^{m}) \subset (C,1)_{G} ( \Delta _{G}^{m})$.\\
Similarly we can prove (ii). 
\end{proof}


\noindent 
\section{  Geometric Statistical Convergence}

\noindent In this section, we define a $\lambda-$ geometric statistical convergence $S_{\lambda } \left(\right. \Delta _{G}^{m})$ and  then establish the  relationship of $S_{\lambda } \left(\right. \Delta _{G}^{m} \left. \right)$ with $\left[V,\; \lambda \right]_{G} \left(\right. \Delta _{G}^{m} \left. \right)$ and $\left[C,1 \right]_{G} \left(\right. \Delta _{G}^{m} \left. \right) .$ The idea of statistical convergence was introduced by Fast \cite{Fast} and studied subsequently in different spaces by several  authors including \cite{Connor, Et6, Et7} .




\par Now we define  geometric statistical convergence separately for  the spaces $[C,1]_{G}\left(\right. \Delta _{G}^{m})$ and   $[V, \lambda]_{G}\left(\right. \Delta _{G}^{m})$.
\begin{definition}
{\rm Now we define 
  a sequence $x=(x_{i})$ is said to be $\Delta^{m}-$ geometric statistically convergent $\Delta^{m}_{G}-$ statistically convergent if there is a complex number $L$ such that 
$$_{G} \lim _{\eta \to \infty} (e \oslash e^{\eta}) \odot  |\{ k \leq n: | \Delta ^{m}_{G} x_{i} \ominus L| \geq \varepsilon \}|=1$$
for every  $\varepsilon >1$. In this case we write $x_{i}  {\mathop {\to} \limits ^{G}} LS(\Delta ^{m}_{G})$. The set $\Delta ^{m}_{G}-$ statistically convergent sequences will be denoted by $S(\Delta^{m}_{G})$.}
\end{definition}
\begin{definition}
 {\rm  A sequence $x=\left(\right. x_{i} \left. \right)$ is said to be $\lambda  {\kern 1pt} {\rm -}$ geometric statistically convergent or $S_{\lambda  } {\kern 1pt} {\rm -}$ geometric convergent to \textit{L}, if for every $\varepsilon >1, $
\[_{G}{\mathop{\lim }\limits_{\eta \to \infty }} (e \oslash e^{\lambda_{\eta }}) \odot  \left|\left\{k\in I_{n} :\left|\Delta _{G}^{m} x_{i} \, \ominus \, L\right|^{G} \ge \varepsilon \right\}\, \right|^{G} =1.\] 
We write

\noindent $S_{\lambda  } \ominus \, _G \lim x=L$ or $ x_{i} {~~} {\mathop{\to }\limits^{G}} L\left(\right. S_{\lambda }( \Delta _{G}^{m}) \left. \right)$

\noindent that is  $S_{\lambda}(\Delta _{G}^m) =\left\{\right. x\in \omega \left(G\right):S_{\lambda}  \ominus   _G\lim x=L{\rm ~for}\; {\rm some}\, L\left. \right\}.$}
\end{definition}
\begin{theorem}
Let $\lambda =\left(\lambda _{k} \right)$ be a non-decreasing sequence  of
positive reals which tend to infinity with \;$\lambda_{1}=1$
\;and\;$\lambda_{k+1} \leq \lambda_{k}+1$ and  $I_{k}=[k - \lambda_{k}+1,k], k=1,2,3,....$.. .\\ Then
\begin{item}
\item[(i)]  If $x_{i}  {\mathop{\to }\limits^{G}}  L\left[V,\; \lambda \right]_{G} \left(\right. \Delta _{G}^{m} \left. \right) ,\; then\; x_{i}  {\mathop{\to }\limits^{G}} L (S_{\lambda }(\Delta _{G}^{m} )) $.

\item[(ii)] If $x\in l_{\infty }^{G} \left(\right. \Delta _{G}^{m} \left. \right)\; and\; x_{i}{\mathop{\to }\limits^{G}}   L\left(\right. S_{\lambda }( \Delta _{G}^{m}) \left. \right),\; then\; x_{i} {\mathop{\to }\limits^{G}}  L{\kern 1pt} {\kern 1pt} \left[V,\; \lambda \right]_{G} \left(\right. \Delta _{G}^{m} \left. \right)\ $ and hence $ x_{i} {\mathop{\to }\limits^{G}}  L{\kern 1pt} {\kern 1pt} \left[C,1 \right]_{G} \left(\right. \Delta _{G}^{m} \left. \right)\ $ provided $x= (x_i)$ is not eventually constant.
\item[(iii)] $S_{\lambda }\left(\right. \Delta _{G}^{m} \left. \right)\ \bigcap l_{\infty }^{G} \left(\right. \Delta _{G}^{m} \left. \right)=\left[V,\; \lambda \right]_{G} \left(\right. \Delta _{G}^{m} \left. \right)\bigcap l_{\infty }^{G} \left(\right. \Delta _{G}^{m} \left. \right) ,$

where $l_{\infty }^{G} \left(\right. \Delta _{G}^{m} \left. \right)=\left\{x=(x_{i}):\Delta _{G}^{m}x \in l_{\infty }^{G} \right\}\ $.
\end{item}
\end{theorem}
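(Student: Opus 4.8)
The plan is to run the classical Connor--Fridy--Leindler arguments inside the geometric calculus, using that the exponential map identifies $\mathbb{R}(G)$ with $\mathbb{R}$, that a geometric sum ${}_G\sum$ is a product, that the geometric additive zero is $1$, that a geometric inequality ``$\ge \varepsilon$'' with $\varepsilon>1$ is the geometric avatar of ``bounded away from $0$'', and that $(e\oslash e^{\lambda_\eta})\odot \varepsilon^{k}=\varepsilon^{k/\lambda_\eta}$. Write $y_i:=\Delta_G^m x_i\ominus L$ and $k_\eta:=\#\{\,i\in I_n : |y_i|^G\ge\varepsilon\,\}$; under the $\exp$-identification, $x_i\to L(S_\lambda(\Delta_G^m))$ is exactly the statement $k_\eta/\lambda_\eta\to 0$.

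For (i): assume $x_i\to L\,[V,\lambda]_G(\Delta_G^m)$, i.e. $(e\oslash e^{\lambda_\eta})\odot {}_G\sum_{i\in I_n}|y_i|^G$ geometrically tends to $1$. Dropping from the geometric sum all terms with $|y_i|^G<\varepsilon$ (each of which is $\ge 1$) and bounding each surviving term below by $\varepsilon$ gives
$$1 \;\le\; \varepsilon^{\,k_\eta/\lambda_\eta} \;\le\; (e\oslash e^{\lambda_\eta})\odot {}_G\sum_{i\in I_n}|y_i|^G .$$
Letting $\eta\to\infty$ and squeezing forces $\varepsilon^{k_\eta/\lambda_\eta}\to 1$, hence $k_\eta/\lambda_\eta\to 0$, which is $x_i\to L(S_\lambda(\Delta_G^m))$.

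For (ii): add the hypothesis $x\in l_\infty^G(\Delta_G^m)$, which (after absorbing $L$) yields a geometric constant $M$ with $|y_i|^G\le M$ for all $i$. Splitting $I_n$ into the ``large'' set of size $k_\eta$ (terms $\le M$) and its complement of size $\le\lambda_n$ (terms $<\varepsilon$) gives
$$(e\oslash e^{\lambda_\eta})\odot {}_G\sum_{i\in I_n}|y_i|^G \;\le\; M^{\,k_\eta/\lambda_\eta}\oplus\varepsilon .$$
By part (i)'s conclusion $k_\eta/\lambda_\eta\to 0$, so $M^{k_\eta/\lambda_\eta}\to 1$; hence the geometric limit superior of the left side is $\le\varepsilon$, and since $\varepsilon>1$ is arbitrary it equals $1$, i.e. $x_i\to L\,[V,\lambda]_G(\Delta_G^m)$. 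To pass from $[V,\lambda]_G$ to $[C,1]_G$ I would split the Cesàro geometric average $(e\oslash e^{n})\odot {}_G\sum_{i=1}^{n}|y_i|^G$ into the block over $I_n=[n-\lambda_n+1,n]$ (controlled by the just-proved $[V,\lambda]_G$-summability, since $\lambda_n/n\le 1$) and the tail over $[1,n-\lambda_n]$, the latter handled via the statistical smallness of the exceptional set together with the geometric boundedness.

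For (iii) this is now immediate from (i) and (ii): if $x\in S_\lambda(\Delta_G^m)\cap l_\infty^G(\Delta_G^m)$ then (ii) puts $x$ in $[V,\lambda]_G(\Delta_G^m)\cap l_\infty^G(\Delta_G^m)$, while (i) gives the reverse inclusion, so the two intersections coincide. The \emph{main obstacle} is part (ii): making the geometric boundedness rigorous and, more seriously, justifying the $[C,1]_G$ conclusion, where the interaction between $\lambda_n$ and $n$ genuinely matters (classically one needs something like $\liminf_n\lambda_n/n>0$), so I would want to verify that the hypotheses as stated — the conditions on $(\lambda_k)$ together with the ``$x$ not eventually constant'' clause — really do close this gap; parts (i) and (iii) are routine once the geometric bookkeeping above is in place.
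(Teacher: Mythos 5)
Your parts (i) and (iii), and the first half of (ii), follow exactly the paper's route: for (i) the paper drops the terms with $\left|\Delta_G^m x_i\ominus L\right|^G<\varepsilon$ from the geometric sum and bounds each surviving term below by $\varepsilon$, obtaining ${}_G\sum_{k\in I_n}\left|\Delta_G^m x_i\ominus L\right|^G\ \ge\ \varepsilon\,\bigl|\{k\in I_n:\left|\Delta_G^m x_i\ominus L\right|^G\ge\varepsilon\}\bigr|^G$, which is your squeeze on $\varepsilon^{k_\eta/\lambda_\eta}$; for the first half of (ii) the paper performs the same large/small split of $I_n$ with the bound $M\odot(e\oslash e^{\lambda_\eta})\odot|\{\cdot\}|^G\oplus\varepsilon$; and (iii) is deduced from (i) and (ii) in both treatments. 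So up to that point your proposal is correct and essentially identical to the paper's.

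The one place you stop short — the passage from $[V,\lambda]_G(\Delta_G^m)$ to $[C,1]_G(\Delta_G^m)$ — is precisely where the paper's own argument is weakest, and your suspicion is well founded. The paper splits $(e\oslash e^{\eta})\odot{}_G\sum_{k=1}^{n}(\cdot)$ into the blocks $[1,n-\lambda_n]$ and $I_n=[n-\lambda_n+1,n]$ and then bounds \emph{both} blocks by $(e\oslash e^{\lambda_\eta})\odot{}_G\sum_{k\in I_n}\left|\Delta_G^m x_i\ominus L\right|^G$, concluding with the factor $e^2$. The bound on the first block is not justified: the indices in $[1,n-\lambda_n]$ do not lie in $I_n$, so nothing controls that partial sum by data on $I_n$ alone. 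Classically $[V,\lambda]\subseteq[C,1]$ (equivalently $S_\lambda\subseteq S$) requires an extra hypothesis such as $\liminf_n\lambda_n/n>0$, exactly as you note; the stated conditions on $(\lambda_k)$ and the ``not eventually constant'' clause do not supply it (e.g.\ $\lambda_n=\sqrt{n}$ type growth satisfies all stated hypotheses but defeats the block estimate). So your proposal reproduces everything the paper actually proves; the step you flag as the main obstacle is a genuine gap, but it is a gap in the paper's proof as well, not an idea you are missing.
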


\begin{proof}
(i) Suppose $\varepsilon >1$ and $ x_{i}{\mathop{\to }\limits^{G}} L\left[V,\; \lambda \right]_{G} \left(\right. \Delta _{G}^{m} \left. \right) ,$ then we have 
\begin{align*}
& {~}_G \sum _{k\in I_{n} }\left|\Delta _{G}^{m} x_{i}\ominus L\right|^{G}  \ge {~}_G \sum _{{\mathop{k\in l_{n} }\limits_{\left|\Delta _{G}^{m} x_{i}\ominus L\right|\ge \in }} }\left|\Delta _{G}^{m}x_{i}\ominus L\right|^{G}  \\ 
& \ge \varepsilon \left|\, \left\{k\in I_{n} :\left|\Delta _{G}^{m} x_{i}\ominus L\right|^{G} \ge \varepsilon \right\}\, \right|^{G} .
\end{align*}
\par Therefore $x_{i} {\mathop{\to }\limits^{G}}  L\left[V,\; \lambda \right]_{G} \left(\right. \Delta _{G}^{m} \left. \right)\Rightarrow x_{i} {\mathop{\to }\limits^{G}} LS_\lambda \left(\right. \Delta _{G}^{m} \left. \right) .$\\
(ii) Suppose $x_{i} {\mathop {\to} \limits ^{G}} LS_{\lambda}(\Delta_{G}^{m})$ and  $x\in l_{\infty } \left(\right. \Delta _{G}^{m} \left. \right)\; $   
\\$\left|\Delta _{G}^{m} x_{i} \ominus L\right|^{G} \le M$  
for all $k.$ Given $\varepsilon >1$, we have 

\begin{align*}
&  (e \oslash e^{\lambda_{\eta }}) \odot {~}_G \sum _{k\in l_{n} }\left|\, \Delta _{G}^{m} x_{i} \ominus L\, \right| ^{G}\\
& =  (e \oslash e^{\lambda_{\eta }}) \odot {~} _G \sum _{{\mathop{k\in I_{n} }\limits_{\left|\, \Delta _{G}^{m} x_{i}\ominus L\, \right|\, \ge \, \varepsilon }} }\left|\, \Delta _{G}^{m} \left(x_{i} \right)\ominus L\, \right|^{G}  \\
&\oplus  (e \oslash e^{\lambda_{\eta }}) \odot {~}_G \sum _{{\mathop{k\in I_{n} }\limits_{\left|\Delta _{G}^{m} x_{i}\ominus L \right| < \varepsilon }}} \left| \Delta _{G}^{m} x_{i}   \ominus L \right|^{G}\\
& \leq  M(e \oslash e^{\lambda_{\eta }}) \odot \left| \left\{ k\in I_{n} :\left| \Delta _{G}^{m} x_{i}\ominus L\right|^{G} \ge \varepsilon \right \}\right |^{G} \oplus 
\varepsilon .  
\end{align*}
As $n \to \infty ,$ the right hand side  goes to one, this gives $x_{i} \to L\left[V,\; \lambda \right]_{G} \left(\right. \Delta _{G}^{m} \left. \right)\ .$\\
If $x_{i} \in L [V, \lambda]^{G} \left(\right. \Delta _{G}^{m} \left. \right)$\\  then to show  $x_{i} \in L (C,1)^{G} \left(\right. \Delta _{G}^{m} \left. \right).$\\
Consider
\begin{align*}
& x_{i} \in L [V, \lambda]^{G} ( \Delta _{G}^{m})\\
& \Rightarrow(e \oslash e^{\lambda_{\eta }}) \odot {~}_G \sum _{k\in l_{n} }\left|\, \Delta _{G}^{m} x_{i} \ominus L\, \right| ^{G}  \leq \varepsilon , \textrm{~for some~} L .
\end{align*}
To show $$(e \oslash e^{\eta}) \odot {~}_G \sum _{k=1 }^{n}\left(\, \Delta _{G}^{m} x_{i} \ominus L\, \right) ^{G}  \leq \varepsilon , \textrm{~for some~} L .$$
Consider \begin{align*}
& (e \oslash e^{\eta}) \odot {~}_G \sum _{k=1 }^{n}\left(\, \Delta _{G}^{m} x_{i} \ominus L\, \right) ^{G}\\
& \Rightarrow (e \oslash e^{\eta}) \odot {~}_G \sum _{k=1 }^{n-\lambda_{n}}\left(\, \Delta _{G}^{m} x_{i} \ominus L  \, \right) \oplus (e \oslash e^{\eta}) \odot {~}_G \sum _{k=n-\lambda_{n}+1 }^{n}\left(\, \Delta _{G}^{m} x_{i} \ominus L  \, \right) ^{G}\\
& \leq \left( (e \oslash e^{\lambda_{\eta }}) \odot {~}_G \sum _{k\in l_{n}}\left| \Delta _{G}^{m} x_{i} \ominus L  \, \right|^{G} \right) \oplus \left((e \oslash e^{\lambda_{\eta }}) \odot {~}_G \sum _{k\in l_{n}}^{n}\left|\, \Delta _{G}^{m} x_{i} \ominus L  \, \right| ^{G}\right)\\
&  \leq  e^{2} \odot (e \oslash e^{\lambda_{\eta }}) \odot {~}_G \sum _{k\in l_{n}}\left|\Delta _{G}^{m} x_{i} \ominus L  \, \right|^{G} 
\end{align*}
we obtain $x_{i} \rightarrow L(C,1)_{G}(\Delta^{m}_{G}).$\\
(iii) From (i) and (ii) it is obvious.
\end{proof}

\begin{theorem}
If $_G {\mathop{\lim }\limits_{\eta \to \infty}}  \inf (e^{\lambda _{\eta}} \ominus e^{\eta}) >1,$ then $S (\Delta _{m}^{G}) \subset S_{\lambda} (\Delta _{m}^{G}) .$
\end{theorem}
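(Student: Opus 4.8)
The plan is to mimic the classical Connor/Šalát-type argument that inclusion between ordinary statistical convergence and $\lambda$-statistical convergence follows from a density comparison, here transported through the geometric (multiplicative) calculus. The key observation is that everything in the geometric setting is the exponential image of the Newtonian setting: $a \oplus b = ab$, $a \ominus b = a/b$, $e^x \odot e^y = e^{xy}$, and $(e \oslash e^\eta)$ plays the role of $1/\eta$. So the hypothesis $\,_G\liminf_{\eta\to\infty}(e^{\lambda_\eta}\ominus e^\eta) = \,_G\liminf(e^{\lambda_\eta/\eta}) > 1$ is exactly the classical condition $\liminf_n \lambda_n/\eta > 0$, and the conclusion $S(\Delta_G^m)\subset S_\lambda(\Delta_G^m)$ is exactly the classical statement $\lim_n \frac1n|\{k\le n : |\Delta^m x_k - L|\ge\varepsilon\}| = 0 \Rightarrow \lim_n\frac1{\lambda_n}|\{k\in I_n : |\Delta^m x_k - L|\ge\varepsilon\}| = 0$ read off under the isomorphism $x \mapsto e^x$.

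First I would fix $x=(x_i)$ with $x_i \,{\mathop\to\limits^G}\, L\,(S(\Delta_G^m))$ and fix $\varepsilon>1$. Set $K_\eta(\varepsilon) = \{k\le\eta : |\Delta_G^m x_k \ominus L|^G \ge \varepsilon\}$; since $I_\eta = [\eta-\lambda_\eta+1,\eta]\subseteq\{1,\dots,\eta\}$, the geometric cardinality satisfies $|\{k\in I_\eta : |\Delta_G^m x_k\ominus L|^G\ge\varepsilon\}|^G \le |K_\eta(\varepsilon)|^G$ (a subset has smaller multiplicative count, i.e.\ exponent). Then I would write the chain
\begin{align*}
(e\oslash e^{\lambda_\eta})\odot\bigl|\{k\in I_\eta : |\Delta_G^m x_k\ominus L|^G\ge\varepsilon\}\bigr|^G
&\le (e\oslash e^{\lambda_\eta})\odot |K_\eta(\varepsilon)|^G\\
&= \bigl(e^\eta\oslash e^{\lambda_\eta}\bigr)\odot\Bigl((e\oslash e^\eta)\odot|K_\eta(\varepsilon)|^G\Bigr).
\end{align*}
By hypothesis $\,_G\liminf(e^{\lambda_\eta}\ominus e^\eta) > 1$, so $e^\eta\oslash e^{\lambda_\eta}$ is geometrically bounded above (i.e.\ $\eta/\lambda_\eta$ is bounded), while $(e\oslash e^\eta)\odot|K_\eta(\varepsilon)|^G \,{\mathop\to\limits^G}\,1$ by the assumed $\Delta_G^m$-statistical convergence of $x$. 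Hence the left-hand side tends geometrically to $1$, which is precisely $x_i\,{\mathop\to\limits^G}\,L\,(S_\lambda(\Delta_G^m))$, giving $S(\Delta_G^m)\subset S_\lambda(\Delta_G^m)$.

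The main obstacle is bookkeeping rather than depth: one must be careful that ``geometric cardinality'' $|\cdot|^G$ and the geometric operations interact correctly, in particular that the product-to-exponent dictionary is applied consistently so that the estimate $e^\eta\oslash e^{\lambda_\eta}$ is bounded ``above in the geometric order'' corresponds to $\eta/\lambda_\eta$ bounded above in the usual order — this is where the hypothesis $\,_G\liminf(e^{\lambda_\eta}\ominus e^\eta)>1$ is used, since it is equivalent to $\limsup \eta/\lambda_\eta<\infty$. A secondary point is to note that monotonicity of $(e\oslash e^{\lambda_\eta})\odot|\cdot|^G$ under set inclusion is legitimate in $\mathbb{R}(G)$; this follows because $|\cdot|^G$ is a geometric-positive quantity and $\odot$ preserves the geometric order. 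Once these are recorded, the argument is the display above, and I would close by remarking that the inclusion can be strict, mirroring the classical case, though a counterexample is not needed for the stated theorem.
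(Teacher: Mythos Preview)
Your proof is correct and follows essentially the same route as the paper's: both use the set inclusion $\{k\in I_\eta:|\Delta_G^m x_k\ominus L|^G\ge\varepsilon\}\subset\{k\le\eta:|\Delta_G^m x_k\ominus L|^G\ge\varepsilon\}$ and then factor the normalizing weight so as to bring the hypothesis on $\lambda_\eta/\eta$ into play. The only cosmetic difference is the orientation of the inequality---the paper writes $(e\oslash e^{\eta})|K_\eta|^G\ge(e^{\lambda_\eta}\oslash e^{\eta})\odot(e\oslash e^{\lambda_\eta})|\{k\in I_\eta:\ldots\}|^G$ whereas you bound the $\lambda$-density directly by $(e^{\eta}\oslash e^{\lambda_\eta})\odot(e\oslash e^{\eta})|K_\eta|^G$---but the content is identical.
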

\begin{proof}
 Given $\varepsilon >1,$ we have
 \begin{align*}
&\left\{k\in I_{n} :\left|\, \Delta _{G}^{m} x_{i}{\kern 1pt} {\kern 1pt} \ominus L\, \right|^{G} \ge \varepsilon \right\}\subset \left\{k\le n:\left|\, \Delta _{G}^{m} x_{i}{\kern 1pt} {\kern 1pt} \ominus L\right|^{G} \ge \, \varepsilon \right\}.
\end{align*}
Therefore,
\begin{align*}
& (e \oslash e^{\eta}) \left|\left\{k\le n:\left|\Delta _{G}^{m} x_{i}{\kern 1pt} {\kern 1pt} \ominus  L\right|^{G} \ge \varepsilon \right\}\right|^{G} \\
& \ge  (e \oslash e^{\eta}) \left|\left\{k\in I_{n} :\left|\Delta _{G}^{m} x_{i}{\kern 1pt} {\kern 1pt} \ominus  L\right|^{G} \ge \varepsilon \right\}\right|^{G} \\ 
&=(e^{\lambda_{n}} \oslash e^{\eta}) \odot  (e \oslash e^{\lambda_{\eta }}) \left|\left\{k\in I_{n} :\left|\Delta _{G}^{m} x_{i}{\kern 1pt} {\kern 1pt} \ominus L\right|^{G} \ge \varepsilon \right\}\right|^{G} .
\end{align*} 
Taking the limit as $ n \to \infty $  and by hypothesis $_G {\mathop{\lim }\limits_{\eta \to \infty}}  \inf (e^{\lambda _{\eta}} \ominus e^{\eta}) >1,$ we get $x_{i}{\mathop{\to }\limits^{G}} L S (\Delta _{m}^{G}) \Rightarrow x_{i}{\mathop{\to }\limits^{G}}  L S_{\lambda} (\Delta _{m}^{G}) .$
\end{proof}

\section{ New Geometric sequence spaces defined by Orlicz functions}

 This part  deals with some new types of geometric difference sequence spaces which includes  Orlicz functions. Also here we  examine their topological properties.\\
   Lindenstrauss and Tzafriri \cite{Tzafriri} constructed the sequence space $l_M$ by using the Orlicz function $M$  as follows:
  $$ l_M =\left \{ x=(x_i) \in \omega : \sum_{k=1} ^ \infty M \left( \frac{| x_i|}{\rho}\right) < \infty, \textrm{~ for some~} \rho >0 \right \}.
$$  
With norm $$ \lVert x  \rVert =  inf  \left \{ \rho >0 : {\sum_{k=1}^\infty}  M \left( \frac{| x_i|}{\rho}\right) \leq 1 \right \}$$ this is a Banach space. \\

For  $M$   an Orlicz function, $m$  a positive integer  also $p\left(G\right)=\left(\right. p_{k} \left. \right)_{G} $  any sequence of strictly positive real numbers now we  define the spaces,
\begin{align*}
 & [V,\; \lambda, M, p]^{G} (\Delta_{G}^{m})  = \\
 & \bigg\{ x= \left(x_{i} \right):{~}_{G}{\mathop{\lim }\limits_{\eta \to \infty }}  (e \oslash e^{\lambda_{\eta }}) \odot {~}_G \sum _{k\in I_{n} }\left[M \odot  \left(\frac{\left|\Delta _{G}^{m} x_{i}\ominus L\, \right|^{G} }{\rho } G \right)\right]^{\left(p_{k} \right)^{G} }\\ &=1,
 \textrm{~ for some~} L \textrm{~and~} \rho>1 \bigg\},\\
& [V,\; \lambda, M, p]^{G}_{0} (\Delta_{G}^{m})  = \bigg\{ x= \left(x_{i} \right):_{G}{\mathop{\lim }\limits_{\eta \to \infty }}  (e \oslash e^{\lambda_{\eta }}) {~}_G \sum _{k\in I_{n} }\left[M \odot  \left(\frac{\left|\Delta _{G}^{m} x_{i} \right|^{G} }{\rho } G \right)\right]^{\left(p_{k} \right)^{G} }\\ &=1,
 \textrm{~ for some~}  \rho>1 \bigg\},\\
 &[V,\; \lambda, M, p]^{G}_ {\infty} (\Delta_{G}^{m})  = \bigg\{ x= \left(x_{i} \right):{\mathop{\sup }\limits_{n\in \mathbb{N} }}  (e \oslash e^{\lambda_{\eta }}) {~}_G \sum _{k\in I_{n} }\left[M \odot  \left(\frac{\left|\Delta _{G}^{m} x_{i} \right|^{G} }{\rho } G \right)\right]^{\left(p_{k} \right)^{G} }\\  & < \infty,
 \textrm{~ for some~}  \rho>1 \bigg\}.
\end{align*}
\noindent For $p\left(G\right)=\left(\right. p_{k} \left. \right)_{G} = 1, \forall i $, we denote the above spaces by \\ $[V,\; \lambda, M]^{G} (\Delta_{G}^{m}),  [V,\; \lambda, M]^{G}_{0} (\Delta_{G}^{m})$  and $[V,\; \lambda, M]^{G}_ {\infty} (\Delta_{G}^{m})$  respectively. \\Then we have certain topological properties of the spaces.

\begin{theorem}\label{thm ch 2 2.4.1}
For a bounded sequence $p^{G}=\left(p_{k} \right)^{G}$ of strictly positive geometric real numbers, the space $[V,\; \lambda, M, p]^{G} (\Delta_{G}^{m}),  [V,\; \lambda, M, p]^{G}_{0} (\Delta_{G}^{m})$  and $[V,\; \lambda, M, p]^{G}_ {\infty} (\Delta_{G}^{m})$  are linear over $\mathbb{C}(G),$ the field of geometric complex numbers.
\end{theorem}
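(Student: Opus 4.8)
The plan is to verify, in the multiplicative (geometric) calculus, that each of the three spaces is closed under geometric addition $\oplus$ and geometric scalar multiplication $\odot$; I will carry this out for $[V,\lambda,M,p]^{G}(\Delta_{G}^{m})$ in detail and only indicate the changes needed for the other two. Fix $x=(x_{i})$ and $y=(y_{i})$ in $[V,\lambda,M,p]^{G}(\Delta_{G}^{m})$, with associated limits $L,L'$ and moduli $\rho_{1},\rho_{2}>1$, and fix scalars $\alpha,\beta\in\mathbb{C}(G)$. The first step is to use the geometric linearity of the difference operator: from the expansion (\ref{eqem}) one has $\Delta_{G}^{m}(\alpha\odot x\oplus\beta\odot y)=\alpha\odot\Delta_{G}^{m}x\oplus\beta\odot\Delta_{G}^{m}y$, so that with $L''=\alpha\odot L\oplus\beta\odot L'$ the quantity to be estimated has argument $\alpha\odot(\Delta_{G}^{m}x_{i}\ominus L)\oplus\beta\odot(\Delta_{G}^{m}y_{i}\ominus L')$.

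The second step is to choose a common modulus and to set up the pointwise estimate. Put $\rho_{3}=e^{2}\odot\max\{|\alpha|\odot\rho_{1},\,|\beta|\odot\rho_{2}\}$ (the geometric maximum), so that $\rho_{3}>1$. Using the geometric triangle inequality for $|\cdot|^{G}$ together with the monotonicity and geometric convexity of the Orlicz function $M$ — that is, $M\left((e\oslash e^{2})\odot a\oplus(e\oslash e^{2})\odot b\right)\le(e\oslash e^{2})\odot M(a)\oplus(e\oslash e^{2})\odot M(b)$ and $M(\sigma\odot a)\le\sigma\odot M(a)$ for $\sigma$ geometrically between $1$ and $e$ — one obtains, for every $k\in I_{n}$,
\[
M\odot\left(\frac{|\Delta_{G}^{m}(\alpha\odot x_{i}\oplus\beta\odot y_{i})\ominus L''|^{G}}{\rho_{3}}\right)\ \le\ (e\oslash e^{2})\odot M\odot\left(\frac{|\Delta_{G}^{m}x_{i}\ominus L|^{G}}{\rho_{1}}\right)\ \oplus\ (e\oslash e^{2})\odot M\odot\left(\frac{|\Delta_{G}^{m}y_{i}\ominus L'|^{G}}{\rho_{2}}\right).
\]
Since $p^{G}=(p_{k})^{G}$ is bounded, the elementary inequality $|a\oplus b|^{(p_{k})^{G}}\le D\odot\left(|a|^{(p_{k})^{G}}\oplus|b|^{(p_{k})^{G}}\right)$ holds with a single geometric constant $D$ independent of $k$ (the image under the exponential isomorphism of $|a+b|^{p_{k}}\le\max(1,2^{H-1})(|a|^{p_{k}}+|b|^{p_{k}})$, where $H=\sup_{k}p_{k}$); raising the previous estimate to the power $(p_{k})^{G}$ and applying this inequality splits the left side into two terms, the factors $(e\oslash e^{2})^{(p_{k})^{G}}$ being at most $e$. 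Summing geometrically over $k\in I_{n}$, multiplying by $(e\oslash e^{\lambda_{\eta}})$, and letting $\eta\to\infty$, each term tends to $1$ because $x$ and $y$ lie in the space; hence the left side tends to $1$, and therefore $\alpha\odot x\oplus\beta\odot y\in[V,\lambda,M,p]^{G}(\Delta_{G}^{m})$, which is the required closure property.

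For $[V,\lambda,M,p]^{G}_{0}(\Delta_{G}^{m})$ the same computation applies verbatim with $L=L'=L''=0_{G}$, and for $[V,\lambda,M,p]^{G}_{\infty}(\Delta_{G}^{m})$ one replaces the geometric limit as $\eta\to\infty$ by the supremum over $n\in\mathbb{N}$ throughout and concludes that the supremum remains finite rather than converging to $1$. The step I expect to be the genuine (if not deep) obstacle is the faithful transport of the two classical scalar inequalities into the multiplicative calculus — convexity and monotonicity of $M$, used to absorb the scalars $\alpha,\beta$ and the enlargement of the modulus from $\rho_{1},\rho_{2}$ to $\rho_{3}$, and the $|a+b|^{p_{k}}$ inequality, used to deal with the variable but bounded exponents — together with checking that every constant produced along the way ($e^{2}$, the constant $D$, and the $(p_{k})^{G}$-powers of $e\oslash e^{2}$) is a geometric number independent of both $k$ and $\eta$, so that it disappears in the limit. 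Once this bookkeeping is arranged, the remainder is a direct translation of the familiar Euclidean argument for Orlicz-type $[V,\lambda]$ spaces.
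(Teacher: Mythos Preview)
Your proposal is correct and follows essentially the same route as the paper: linearity of $\Delta_{G}^{m}$, an enlarged modulus $\rho_{3}$ built from $\rho_{1},\rho_{2}$ and the scalars, monotonicity/convexity of $M$ to split the Orlicz term, and the bounded-exponent inequality with constant $C=\max\{1,2^{H-1}\}$ to separate the $(p_{k})^{G}$-powers. The only cosmetic difference is that the paper writes out the computation for the null space $[V,\lambda,M,p]^{G}_{0}(\Delta_{G}^{m})$ rather than the convergent space, but the argument is the same.
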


\begin{proof}
Let $x,\; y\in [V,\; \lambda, M, p]^{G}_{0} (\Delta_{G}^{m})$ and $\alpha ,\; \beta \in \mathbb{C}(G) .$ Then there exist positive numbers $\rho _{1} ,\; \rho _{2} $ such that
\[_{G}{\mathop{\lim }\limits_{\eta \to \infty }}  (e \oslash e^{\lambda_{\eta }}) \odot {~}_G \sum _{k\in 1_{n} }\left[M \odot \left(\frac{\left|\, \Delta _{m}^{G} x_{i}\, \right|^{G} }{\rho _{1} } G \right)\right] ^{\left(p_{k} \right)^{G} } =1,\] and
\[ _{G}{\mathop{\lim }\limits_{\eta \to \infty }}  (e \oslash e^{\lambda_{\eta }}) \odot {~}_G \sum _{k\in 1_{n} }\left[M \odot \left(\frac{\left|\, \Delta _{m}^{G} y_{i}\, \right|^{G} }{\rho _{2} }G \right)\right] ^{\left(p_{k} \right)^{G} } =1.\] 
Define $\rho _{3} =\max \left\{\right. 2\left|\, \alpha \, \right|\, \rho _{1} ,\; 2\left|\, \alpha \, \right|\, \rho _{2} \left. \right\} .$

\noindent Since $\Delta _{m}^{G} $ is linear and \textit{M} is non-decreasing 
\begin{align*}
&  (e \oslash e^{\lambda_{\eta }}) \odot {~}_G \sum _{k\in 1_{n} }\left[M \odot \left(\frac{\left|\, \Delta _{m}^{G} \left(\alpha x_{i}\oplus \beta y_{i}\right)\, \right|^{G} }{\rho _{3} }G \right)\right] ^{\left(p_{k} \right)^{G} }\\ 
& = (e \oslash e^{\lambda_{\eta }}) {~}\odot _G \sum _{k\in 1_{n} }\left[M \odot \left(\frac{\left|\, \alpha \Delta _{m}^{G} x_{i}\oplus \beta \Delta _{m}^{G} y_{i}\, \right|^{G} }{\rho _{3} } G\right)\right] ^{\left(p_{k} \right)^{G} } \\
& \le  (e \oslash e^{\lambda_{\eta }}) {~}\odot _G \sum _{k\in 1_{n} }\left[M \odot \left(\left(\frac{\left|\, \alpha\Delta _{m}^{G} x_{i}\, \right|^{G} }{\rho _{3} }G \right)\oplus  \left(\frac{\left|\, \beta \Delta _{m}^{G} y_{i}\, \right|^{G} }{\rho _{3} } G\right)\right)\right]^{\left(p_{k} \right)^{G} } \\
&\le  (e \oslash e^{\lambda_{\eta }}) \odot {~} _G \sum _{k\in 1_{n} }\frac{1}{2^{({p^{k})_{G} }} }\left[M \odot \left(\frac{\left|\Delta _{m}^{G} x_{i}\right|^{G} }{\rho _{1} }G \right)\oplus M \odot \left(\frac{\left|\Delta _{m}^{G} y_{i}\right|^{G} }{\rho _{2} }G \right)\right]^{\left(p_{k} \right)^{G} }  \\ 
&\le  (e \oslash e^{\lambda_{\eta }}) \odot {~} _G \sum _{k\in 1_{n} }\left[M \odot \left(\frac{\left|\Delta _{m}^{G} x_{i}\right|^{G} }{\rho _{1} } G\right)\oplus M \odot \left(\frac{\left|\Delta _{m}^{G} y_{i}\right|^{G} }{\rho _{2} }G \right)\right]^{\left(p_{k} \right)^{G} }  \\ 
&\le C (e \oslash e^{\lambda_{\eta }}) \odot {~}_G \sum _{k\in 1_{n} }\left[M\odot\left(\frac{\left|\Delta _{m}^{G} x_{i}\right|^{G} }{\rho _{1} }G \right)\right] ^{\left(p_{k} \right)^{G} } \\ & \oplus  C \odot  (e \oslash e^{\lambda_{\eta }}) {~}_G \sum _{k\in 1_{n}}\left[M \odot \left(\frac{\left|\Delta _{m}^{G} y_{i}\right|^{G} }{\rho _{2} }G \right)\right]^{\left(p_{k} \right)^{G} }  \\ 
& {\mathop{\to }\limits^{G}} 1,
\end{align*} 
where $C=\max \left\{\right. 1,\; 2^{H-1} \left. \right\},$ $H={\mathop{\sup }\limits_{i \in \mathbb{N}}} \left(p_{k} \right)^{G}  .$

\noindent So that $\alpha x_{i}\oplus \beta y_{i}\in \left[V,\; \lambda, M, p \right]_{0}^{G} (\Delta _{G}^{m} ).$

\noindent Hence $ \left[V,\; \lambda, M, p \right]_{0}^{G} (\Delta _{G}^{m} )$ is a linear space.

Other spaces can be proved by using similar techniques.
\end{proof}
\begin{theorem} \label{thm2.4.2}
A bounded sequence $p^{G}=(p_{k})^{G}$ of strictly positive real numbers, $ \left[V,\; \lambda, M, p \right]_{0}^{G} (\Delta _{G}^{m} )$ is a paranormed space (not necessarily totally paranormed) with a paranorm
\begin{align*}
&g(x)=\\
& \inf \left\lbrace \rho^ \frac{(p_{n})^{G}}{H} :  \left( (e \oslash e^{\lambda_{\eta }}) \odot _G \sum _{k\in I_{n} }\left[M \odot  \left(\frac{\left|\Delta _{G}^{m} x_{i} \right|^{G} }{\rho } G \right)\right]^{\left(p_{k} \right)^{G} } \right)^\frac{1}{H} \leq 1, n=1,2,3,....\right\rbrace
\end{align*}
where $H= max \{1, {\mathop{\sup }\limits_{i \in \mathbb{N} }p_{k}^{G}}\}$. For  $m$ be a positive integer,  $M$  any Orlicz function. 
\end{theorem}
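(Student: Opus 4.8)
The plan is to verify the standard list of paranorm axioms for $g$, adapting the classical Maddox-type argument for Orlicz paranorms to the geometric setting. Concretely, I would show in turn: (a) $g(x)\ge 1$ and $g(\theta_G)=1$ where $\theta_G=1_G$ is the geometric zero (this is immediate from the definition of the infimum and the fact that $M(1)=1$ for an Orlicz function, so taking small $\rho$ forces the constraint for the zero sequence); (b) symmetry, $g(\ominus x)=g(x)$, which follows because $\left|\Delta_G^m(\ominus x_i)\right|^G=\left|\Delta_G^m x_i\right|^G$ and hence the constraint set defining $g(\ominus x)$ is identical to that defining $g(x)$; (c) subadditivity $g(x\oplus y)\le g(x)\oplus g(y)$; and (d) continuity of scalar multiplication, i.e. if $\lambda_n\to\lambda$ in $\mathbb{C}(G)$ and $g(x_n\ominus x)\to 1$ then $g(\lambda_n\odot x_n\ominus\lambda\odot x)\to 1$.

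For the subadditivity step (c), I would fix $\rho_1,\rho_2>1$ that are admissible for $x$ and $y$ respectively — that is, the bracketed geometric sums with $\rho_1$ (resp. $\rho_2$) are $\le 1$ after taking the $H$-th geometric root — and put $\rho_3=\rho_1\oplus\rho_2$ (geometric addition of the denominators). Using that $M$ is non-decreasing and convex together with the elementary inequality $(a_k+b_k)^{p_k}\le C(a_k^{p_k}+b_k^{p_k})$ with $C=\max\{1,2^{H-1}\}$ and $H=\max\{1,\sup_i p_i^G\}$ (the very constant already invoked in Theorem~\ref{thm ch 2 2.4.1}), one estimates
\[
\left| \Delta_G^m(x_i\oplus y_i)\right|^G \oslash \rho_3 \;\le\; \frac{\rho_1}{\rho_3}\cdot\frac{\left|\Delta_G^m x_i\right|^G}{\rho_1}\;\oplus\;\frac{\rho_2}{\rho_3}\cdot\frac{\left|\Delta_G^m y_i\right|^G}{\rho_2},
\]
so by convexity of $M$ the sum for $x\oplus y$ with denominator $\rho_3$ is dominated by the convex combination of the sums for $x$ with $\rho_1$ and for $y$ with $\rho_2$, each $\le 1$; hence $\rho_3$ is admissible for $x\oplus y$, and taking the infimum over all such $\rho_1,\rho_2$ gives $g(x\oplus y)\le g(x)\oplus g(y)$. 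The geometric power normalization $\rho^{(p_n)^G/H}$ out front is handled exactly as in the real case since the map $t\mapsto t^{(p_n)^G/H}$ is monotone.

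For step (d), scalar continuity, I would split as usual: write $\lambda_n\odot x_n\ominus\lambda\odot x=\lambda_n\odot(x_n\ominus x)\oplus(\lambda_n\ominus\lambda)\odot x$, use subadditivity to reduce to controlling $g(\lambda_n\odot(x_n\ominus x))$ and $g((\lambda_n\ominus\lambda)\odot x)$ separately, and in each term choose the admissible $\rho$ for the untwisted sequence scaled by a factor involving $|\lambda_n|$ (or $|\lambda_n\ominus\lambda|$), exploiting that bounded scalars contribute a bounded power of the scalar to $g$ because $(p_k)^G$ is bounded; the term with $(\lambda_n\ominus\lambda)$ then vanishes since $|\lambda_n\ominus\lambda|\to 1$ in $\mathbb{C}(G)$ and $g(x)$ is finite. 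I expect the main obstacle to be bookkeeping: keeping the geometric operations $\oplus,\ominus,\odot,\oslash$ and the geometric exponentials straight while transcribing the classical convexity-and-$C$-constant argument, and in particular making sure the infimum in the definition of $g$ is taken over a nonempty set (which needs $x\in[V,\lambda,M,p]_0^G(\Delta_G^m)$ to guarantee at least one admissible $\rho$). The phrase ``not necessarily totally paranormed'' signals that $g(x)=1\Rightarrow x=1_G$ can fail when $\inf_k p_k^G=1$, so I would not attempt to prove that implication; the four axioms above are exactly what is claimed.
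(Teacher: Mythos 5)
Your proposal is correct and follows the same overall skeleton as the paper (verify the paranorm axioms: value at the geometric zero, symmetry, subadditivity, continuity of scalar multiplication), but it diverges in a substantive way at the subadditivity step. The paper disposes of subadditivity in one line by citing the linearity proof of Theorem~\ref{thm ch 2 2.4.1} with $\alpha=\beta=1$; that proof, however, takes $\rho_3=\max\{2\rho_1,2\rho_2\}$ and introduces the constant $C=\max\{1,2^{H-1}\}$, which establishes closure of the space under addition but does not by itself deliver $g(x\oplus y)\le g(x)\oplus g(y)$, since the constant $C$ and the $\max$-type choice of $\rho_3$ do not interact well with the infimum defining $g$. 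Your route --- taking $\rho_3$ to be the sum of the admissible denominators and using convexity of $M$ to write the quotient for $x\oplus y$ as a convex combination of the quotients for $x$ and $y$, then exploiting that $t\mapsto t^{(p_n)^G/H}$ is subadditive because $p_n^G/H\le 1$ --- is the standard Maddox argument and is the one that actually yields the triangle inequality for $g$; in this respect your proof is more careful than the paper's. (One small caution: the denominators $\rho$ are ordinary positive reals in the classical argument, so "geometric addition" $\rho_1\oplus\rho_2$ should be interpreted consistently with however the paper treats $\rho>1$; the convex weights $\rho_1/(\rho_1+\rho_2)$ must still sum to the appropriate unit.) For scalar continuity your decomposition $\lambda_n\odot x_n\ominus\lambda\odot x=\lambda_n\odot(x_n\ominus x)\oplus(\lambda_n\ominus\lambda)\odot x$ is equivalent in substance to the paper's two-step treatment (first $\alpha$ fixed and $x\to 1_G$ using $|\alpha|^{(p_n)^G}\le\max\{1,|\alpha|^{\sup(p_n)^G}\}$, then $x$ fixed and $\alpha\to 1_G$ via convexity and an $\varepsilon/2$ splitting over $n\le N$ and $n>N$), so no gap there. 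Your remark about why total paranormedness is not claimed also matches the intent of the statement.
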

\begin{proof}
It is obvious that  $g(x)= g(\ominus x).$ The subadditivity of $g$ follows from the proof of  theorem (\ref{thm ch 2 2.4.1}) taking $\alpha =1 , $ $\beta =1$. It is trivial that $\Delta ^{m}_{G} x=1$ for $x=1$. Since $M(1)=1$, we get $ \inf { \rho^ \frac{p_{n}}{H}}=1$ for $x=1$.\\
Now, here we have to prove scalar multiplication is continuous. Let us take $\alpha$ be any complex number . For the continuity of scalar multiplication  let $\alpha$ be fixed and  $x \rightarrow 1$ in $ \left[V,\; \lambda, M, p \right]_{0}^{G} (\Delta _{G}^{m} )$\\
Consider
\begin{align*}
& g(\alpha  \odot x)= \inf  \bigg\{ \rho^ \frac{(p_{n})^{G}}{H} :  \left( (e \oslash e^{\lambda_{\eta }}) \odot _G \sum _{k\in I_{n} }\left[M \odot  \left(\frac{\left|\Delta _{G}^{m}(\alpha x_{i}) \right|^{G} }{\rho } G \right)\right]^{\left(p_{k} \right)^{G} } \right)^\frac{1}{H} \leq 1, \\ & n=1,2,3,..... \bigg\}\\
& = \inf  \bigg\{ \rho^ \frac{(p_{n})^{G}}{H} :  \left( (e \oslash e^{\lambda_{\eta }}) \odot _G \sum _{k\in I_{n} }\left[M \odot  \left(\frac{\left| \alpha\Delta _{G}^{m}( x_{i}) \right|^{G} }{\rho } G \right)\right]^{\left(p_{k} \right)^{G} } \right)^\frac{1}{H} \leq 1, \\ &  n=1,2,3,......
 \bigg\} \\
& (\because \textrm{~the linearity of~}  \Delta_{m}^{G}).
\end{align*}
Then 
\begin{align*}
& g(\alpha  \odot x)
= \inf \bigg\{ ( |\alpha|r)^ \frac{(p_{n})^{G}}{H} :  \left( (e \oslash e^{\lambda_{\eta }}) \odot _G \sum _{k\in I_{n} }\left[M \odot  \left(\frac{\left|\Delta _{G}^{m} x_{i} \right|^{G} }{r } G \right)\right]^{\left(p_{k} \right)^{G} } \right)^\frac{1}{H} \leq 1, \\ & n=1,2,3,.... \bigg\}
\end{align*}
where $r= \rho / |\alpha|$. Since $ |\alpha|^ {(p_{n})^{G}} \leq  max\{1,  |\alpha|^{\sup {(p_{n})^{G}}\}}$ we have \\
$g(\alpha  \odot x) \leq max\{1,  |\alpha|^{\sup {(p_{n})^{G}}}\}^ \frac{1}{H} \odot $
\begin{align} \label{2.4.1}
& \inf  \bigg\{ ( r^ \frac{(p_{n})^{G}}{H} :  \left( (e \oslash e^{\lambda_{\eta }}) \odot _G \sum _{k\in I_{n} }\left[M \odot  \left(\frac{\left|\Delta _{G}^{m} x_{i} \right|^{G} }{r } G \right)\right]^{\left(p_{k} \right)^{G} } \right)^\frac{1}{H} 
\leq 1, n=1,2,3,.... \bigg\},
\end{align}
Therefore $\alpha \odot x \rightarrow 1$ in $ \left[V,\; \lambda, M, p \right]_{0}^{G} (\Delta _{G}^{m} )$. Let $x$ be fixed and $\alpha \rightarrow 1$, so equation (\ref{2.4.1})  converges to one .\\
Let $\alpha_{i} \rightarrow 1$ as $i\rightarrow \infty$. Let $x$ be a fixed sequence in $ \left[V,\; \lambda, M, p \right]_{0}^{G} (\Delta _{G}^{m} )$. For arbitrary $\varepsilon >1$, let $N$ be a positive integer such that 
\begin{align*}
 (e \oslash e^{\lambda_{\eta }}) \odot _G \sum _{k\in I_{n} }\left[M \odot  \left(\frac{\left|\Delta _{G}^{m} x_{i} \right|^{G} }{\rho } G \right)\right]^{\left(p_{k} \right)^{G} } 
\leq \left( \frac{\varepsilon}{2}G \right)^{H}.
\end{align*}
This implies
\begin{align*}
\left( (e \oslash e^{\lambda_{\eta }}) \odot _G \sum _{k\in I_{n} }\left[M \odot  \left(\frac{\left|\Delta _{G}^{m} x_{i} \right|^{G} }{\rho } G \right)\right]^{\left(p_{k} \right)^{G} } \right)^\frac{1}{H}
\leq  \frac{\varepsilon}{2}G.
\end{align*}
for some $\rho >1$ and all $n>N$.
Using convexity of $M$, let $1<|\alpha| <e$,   we get\\
\begin{align*}
& (e \oslash e^{\lambda_{\eta }}) \odot _G \sum _{k\in I_{n} }\left[M \odot  \left(\frac{\left|\alpha \Delta _{G}^{m} x_{i} \right|^{G} }{\rho } G \right)\right]^{\left(p_{k} \right)^{G} } \\
& \leq  (e \oslash e^{\lambda_{\eta }}) \odot _G \sum _{k\in I_{n} }\left[|\alpha| \odot M \odot  \left(\frac{\left| \Delta _{G}^{m} x_{i} \right|^{G} }{\rho } G \right)\right]^{\left(p_{k} \right)^{G} } \\
&\leq  \left(\frac{\varepsilon}{2}G\right)^H.
\end{align*}
Since $\alpha \rightarrow 1$, corresponding to $\varepsilon > 1$ chosen earlier, we can take one $\delta >1$ depending upon $\varepsilon$ s.t. $|\alpha| < \delta$, which implies
\begin{align*}
&  \left((e \oslash e^{\lambda_{\eta }}) \odot _G \sum _{k\in I_{n} }\left[M \odot  \left(\frac{\left|s_{i} \Delta _{G}^{m} x_{i} \right|^{G} }{\rho } G \right)\right]^{\left(p_{k} \right)^{G} }\right) ^\frac{1}{H} \leq \frac{\varepsilon}{2}G, \textrm{~for~} n \leq N.
\end{align*}
Hence  
\begin{align*}
&  \left((e \oslash e^{\lambda_{\eta }}) \odot _G \sum _{k\in I_{n} }\left[M \odot  \left(\frac{\left|s_{i} \Delta _{G}^{m} x_{i} \right|^{G} }{\rho } G \right)\right]^{\left(p_{k} \right)^{G} }\right) ^\frac{1}{H} \\
& \leq  \left( \left((e \oslash e^{\lambda_{\eta }}) \odot _G \sum _{n \leq N }\left[M \odot  \left(\frac{\left|s_{i} \Delta _{G}^{m} x_{i} \right|^{G} }{\rho } G \right)\right]^{\left(p_{k} \right)^{G} }\right) ^\frac{1}{H} \right) \oplus \\
& \left( \left((e \oslash e^{\lambda_{\eta }}) \odot _G \sum _{n > N }\left[M \odot  \left(\frac{\left|s_{i} \Delta _{G}^{m} x_{i} \right|^{G} }{\rho } G \right)\right]^{\left(p_{k} \right)^{G} }\right) ^\frac{1}{H} \right)\\
& < \frac{\varepsilon}{2} \oplus \frac{\varepsilon}{2} \\ 
& = \varepsilon
\end{align*}
for $|\alpha| < min(1, \delta)$. Therefore $\alpha \odot x \rightarrow 1$ in $\left[V,\; \lambda, M, p \right]_{0}^{G} (\Delta _{G}^{m} )$.
\end{proof}
\begin{theorem} \label{thm2.4.3}
Let $X$ stand for  the spaces $[V, \lambda,M]^{G}, [V, \lambda,M]^{G}_{0}$ or $[V, \lambda,M]^{G}_{\infty}$ and $m \geq e$, the geometric identity.  The inclusion $X(\Delta ^{m-1}) \subset X(\Delta^{m})$ is strict   in general $X(\Delta ^{i}) \subset X(\Delta ^{m})$ for all $i= 1,2,....m-1$.
\end{theorem}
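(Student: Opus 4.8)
The plan is to establish the one-step inclusion $X(\Delta_G^{m-1})\subseteq X(\Delta_G^{m})$ and then iterate it, so that $X(\Delta_G^{i})\subseteq X(\Delta_G^{i+1})\subseteq\cdots\subseteq X(\Delta_G^{m})$ for every $i=1,\dots,m-1$; a single explicit sequence will then witness that each of these inclusions is proper. Throughout I would read all the geometric operations through the exponential isomorphism $\exp:\mathbb{R}\to\mathbb{R}(G)$ (and $\mathbb{C}\to\mathbb{C}(G)$), under which $\oplus,\ominus,\odot$ become ordinary $+,-,\times$ and $\Delta_G$ becomes the ordinary forward difference $\Delta x_k=x_k-x_{k+1}$, so the whole argument reduces to its Newtonian analogue.

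For the containment, fix $x\in X(\Delta_G^{m-1})$ and a $\rho>1$ witnessing its membership (together with a limit point $L$ when $X=[V,\lambda,M]^{G}$). Using the recursion $\Delta_G^{m}x_k=\Delta_G^{m-1}x_k\ominus\Delta_G^{m-1}x_{k+1}$, written in the base-space case as $\Delta_G^{m}x_k\ominus 1_G=(\Delta_G^{m-1}x_k\ominus L)\ominus(\Delta_G^{m-1}x_{k+1}\ominus L)$, the geometric triangle inequality, and then the monotonicity and convexity of $M$ together with $(a\oplus b)^{(p_k)^{G}}\le C\odot(a^{(p_k)^{G}}\oplus b^{(p_k)^{G}})$, $C=\max\{1,2^{H-1}\}$, $H=\sup_k(p_k)^{G}$ --- precisely the chain of estimates carried out in the proof of Theorem~\ref{thm ch 2 2.4.1} --- I would obtain, with $\rho'=e^{2}\odot\rho$,
\[
(e\oslash e^{\lambda_{\eta}})\odot{}_G\!\!\sum_{k\in I_n}\!\Big[M\odot\Big(\tfrac{|\Delta_G^{m}x_k\ominus 1_G|^{G}}{\rho'}G\Big)\Big]^{(p_k)^{G}}\le C\odot(e\oslash e^{\lambda_{\eta}})\odot{}_G\!\!\sum_{k\in I_n}\!\Big[M\odot\Big(\tfrac{|\Delta_G^{m-1}x_k\ominus L|^{G}}{\rho}G\Big)\Big]^{(p_k)^{G}}\oplus S_n,
\]
where $S_n$ is the same average with $x_k$ replaced throughout by $x_{k+1}$. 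Since $\{k+1:k\in I_n\}$ is $I_n$ with one end index removed and one added, and $\lambda_{n+1}\le\lambda_n+1$ with $\lambda_n\to\infty$, the shifted average $S_n$ is controlled by the unshifted one up to a correction that stays bounded (for the $\infty$-space) and vanishes in the limit (for the $0$-space and the base space, where the extra boundary term is at most $(1+\lambda_n^{-1})$ times the $I_{n+1}$-average). Letting $\eta=n\to\infty$ then gives $x\in X(\Delta_G^{m})$ in each of the three cases, the limit point in the base-space case being the geometric zero $1_G$.

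For strictness I would take the geometric image $x=(x_k)$ of the ordinary sequence $(k^{m})$, i.e. $x_k=\exp(k^{m})$. Then $\Delta_G^{m}x$ is the geometric image of $\Delta^{m}(k^{m})=(-1)^{m}m!$, a nonzero constant, so $\Delta_G^{m}x$ is a constant geometric sequence and $x\in X(\Delta_G^{m})$ (taking $L=\exp((-1)^{m}m!)$ in the base-space case). On the other hand, for any $j<m$ the sequence $\Delta_G^{j}x$ is the geometric image of the polynomial $\Delta^{j}(k^{m})$ of degree $m-j\ge 1$, so $|\Delta_G^{j}x_k|^{G}\to\infty$; since $M$ is an Orlicz function ($M(t)\to\infty$), $(p_k)^{G}$ is bounded, and a fixed positive fraction of the indices of $I_n=[n-\lambda_n+1,n]$ exceeds $n/2$, a routine lower estimate shows the averages $(e\oslash e^{\lambda_{\eta}})\odot{}_G\sum_{k\in I_n}\bigl[M\odot\bigl(\tfrac{|\Delta_G^{j}x_k|^{G}}{\rho}\,G\bigr)\bigr]^{(p_k)^{G}}$ tend to infinity, so $x\notin X(\Delta_G^{j})$. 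Hence $x$ lies in $X(\Delta_G^{m})$ but in none of $X(\Delta_G^{1}),\dots,X(\Delta_G^{m-1})$, which makes every inclusion $X(\Delta_G^{i})\subseteq X(\Delta_G^{m})$ proper.

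The part I expect to be most delicate is the bookkeeping around the index shift $k\mapsto k+1$ inside the de la Vall\'ee--Poussin averages: reconciling $\sum_{k\in I_n}a_{k+1}$ with $\sum_{k\in I_n}a_k$ while using only $\lambda_{n+1}\le\lambda_n+1$ to keep the two boundary terms under control, and, for the base space, checking that $1_G$ is genuinely an admissible limit point for $\Delta_G^{m}x$. The convexity--monotonicity estimates merely repeat the argument already given for Theorem~\ref{thm ch 2 2.4.1}, and the divergence estimate in the strictness step, though it must be carried out for a general Orlicz function $M$, is elementary.
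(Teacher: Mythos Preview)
Your approach is essentially the paper's: it too proves the one--step inclusion by writing $\Delta_G^{m}x_k=\Delta_G^{m-1}x_k\ominus\Delta_G^{m-1}x_{k+1}$, splitting via the monotonicity and convexity of $M$ with radius $2\rho$, and then iterating; and it uses the same witness $x=(k^{m})$ (geometrically $x_k=\exp(k^{m})$) for strictness, computing $\Delta_G^{m}x$ constant and $\Delta_G^{m-1}x$ unbounded. Two small differences are worth noting. First, the paper treats only $X=[V,\lambda,M]^{G}_{\infty}$ and simply declares both split sums finite without discussing the index shift; your observation that $\{k+1:k\in I_n\}\subseteq I_{n+1}$ together with $\lambda_{n+1}\le\lambda_n+1$ gives $S_n\le(1+\lambda_n^{-1})A_{n+1}$ is exactly the missing justification and carries the argument through all three cases. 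Second, the paper's strictness example specializes to $M(t)=t$, $p_k=1$, $\lambda_n=n$, whereas you argue divergence for a general Orlicz $M$; your version is stronger but not required, since the theorem is stated for the $p$--free spaces $[V,\lambda,M]^{G}$ and one explicit choice of $M,\lambda$ already shows the inclusion can be strict.
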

\begin{proof}
We give only  proof for the space $X= [V, \lambda, M]_\infty^{G}$ , and the proof for other geometric spaces follows in a similar way . \\
Let $x \in  [V, \lambda, M]_\infty^{G}(\Delta^{m-1}_{G})$. Then we have 
\begin{align}\label{ch2th2.4.3}
& \sup _{\eta \in \mathbb{N}} (e \oslash e^{\lambda_{\eta }}) \odot _G \sum _{k\in I_{n} }\left[M \odot  \left(\frac{\left|\Delta _{G}^{m-1} x_{i} \right|^{G} }{\rho } G \right)\right] < \infty,
\end{align}
for some $\rho >1$.\\
As $M$ is convex function and non-decreasing  , we have
\begin{align*}
& (e \oslash e^{\lambda_{\eta }}) \odot _G \sum _{k\in I_{n} }\left[M \odot  \left(\frac{\left|\Delta _{G}^{m} x_{i} \right|^{G} }{2\rho } G \right)\right] \\
& = (e \oslash e^{\lambda_{\eta }}) \odot _G \sum _{k\in I_{n} }\left[M \odot  \left(\frac{\left|\Delta _{G}^{m-1} x_{i} \ominus \Delta _{G}^{m-1} x_{i+1} \right|^{G} }{2\rho } G \right)\right]\\
&  = (e \oslash e^{\lambda_{\eta }}) \odot _G \sum _{k\in I_{n} }\left[ \frac{1}{2}_{G} M \odot  \left(\frac{\left|\Delta _{G}^{m-1} x_{i}  \right|^{G} }{\rho } G \right)\right] \oplus \\ & (e \oslash e^{\lambda_{\eta }}) \odot _G \sum _{k\in I_{n} }\left[ \frac{1}{2}_{G} M \odot  \left(\frac{\left|\Delta _{G}^{m-1} x_{i+1}  \right|^{G} }{\rho } G \right)\right] < \infty {\rm ~by~} (\ref{ch2th2.4.3}).
\end{align*}
Thus $[V, \lambda, M]_\infty^{G} (\Delta^{m-1}_{G}) \subset [V, \lambda, M]_\infty^{G} (\Delta^{m}_{G}).$ Hence  $[V, \lambda, M]_\infty^{G} (\Delta^{i}_{G}) \subset [V, \lambda, M]_\infty^{G} (\Delta^{m}_{G})$  for $i=1,2,....,m-1$.
\end{proof}
Now consider the following example to show the inclusion is strict.
\begin{example}
{\rm The sequence $x=(k^{m}) \in [V, \lambda, M]_\infty^{G} (\Delta^{m}_{G})$,  $M(x)=x$ $p_{k}=1$ $\forall$ $k \in \mathbb{N}$ and $\lambda_{n} =n$ for all $\eta \in \mathbb{N}$.\\
(If $x=(k^{m})$, then $\Delta^{m}_{G}x_{i}=(\ominus e)^m_{G} \odot m!_{G}$ and $\Delta^{m-1}_{G}x_{i}=(\ominus e)^{m+1}_{G} \odot m!_{G}(k \oplus (\frac{(m-1)}{2}G)$ for all $k \in \mathbb{N}.) \textrm{~ by  equation~} (\ref{eqem})$
Hence $x \notin [V, \lambda, M]_\infty^{G} (\Delta^{m-1}_{G})$.\\
This implies the above inclusion (\ref{thm2.4.3})  is strict.}
\end{example}

\begin{theorem} \label{ch2th2.4.4}
The sequence spaces $[V, \lambda, M,p]_{0}^{G}$ and  $[V, \lambda, M,p]_{\infty}^{G}$ are solid.
\end{theorem}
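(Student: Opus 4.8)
The plan is to verify the defining property of a solid (normal) geometric sequence space directly from the monotonicity of the Orlicz function $M$, exploiting the fact that neither space carries a difference operator, so that multiplication by a scalar sequence acts purely coordinatewise. Recall that a geometric sequence space $E$ is \emph{solid} if $(\alpha_{k} \odot x_{k}) \in E$ whenever $(x_{k}) \in E$ and $(\alpha_{k})$ is any sequence in $\mathbb{C}(G)$ with $|\alpha_{k}|^{G} \leq e$ for all $k$, where $e$ denotes the geometric multiplicative identity.

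First I would take $x=(x_{k}) \in [V,\lambda,M,p]_{0}^{G}$, so that there is some $\rho>1$ with
$$_{G}{\mathop{\lim }\limits_{\eta \to \infty }} (e \oslash e^{\lambda_{\eta }}) \odot {~}_G\sum_{k \in I_{n}} \left[ M \odot \left( \frac{|x_{k}|^{G}}{\rho} G \right) \right]^{(p_{k})^{G}} = 1,$$
and fix a multiplier sequence $(\alpha_{k})$ with $|\alpha_{k}|^{G} \leq e$. The central estimate is the coordinatewise domination
$$|\alpha_{k} \odot x_{k}|^{G} = |\alpha_{k}|^{G} \odot |x_{k}|^{G} \leq |x_{k}|^{G},$$
which holds because $e$ is the identity for $\odot$ and $|\alpha_{k}|^{G} \leq e$. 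Since $M$ is non-decreasing and geometric division by $\rho>1$ preserves the geometric order, this yields the termwise inequality $M \odot \left( \frac{|\alpha_{k} \odot x_{k}|^{G}}{\rho} G \right) \leq M \odot \left( \frac{|x_{k}|^{G}}{\rho} G \right)$. Raising to the exponent $(p_{k})^{G}$ is order-preserving for $p_{k}>0$, so after a geometric summation over $k \in I_{n}$ and multiplication by the positive factor $e \oslash e^{\lambda_{\eta}}$, the expression built from $(\alpha_{k} \odot x_{k})$ is geometrically dominated by the one built from $(x_{k})$. Because every summand is $\geq 1$ in the geometric sense, a squeeze argument forces the geometric limit of the dominated expression to equal $1$ as well, giving $(\alpha_{k} \odot x_{k}) \in [V,\lambda,M,p]_{0}^{G}$ with the same $\rho$.

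For $[V,\lambda,M,p]_{\infty}^{G}$ I would repeat the identical termwise domination, but replace the geometric limit by the supremum over $n \in \mathbb{N}$: the same inequality shows that the supremum of the expression formed from $(\alpha_{k} \odot x_{k})$ is bounded above by the finite supremum formed from $(x_{k})$, so the multiplied sequence again lies in the space. This establishes solidity for both spaces.

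The point that most needs care — rather than a genuine obstacle — is the bookkeeping of the geometric operations: one must confirm that $|\alpha_{k}|^{G} \leq e$ really produces $|\alpha_{k}|^{G} \odot |x_{k}|^{G} \leq |x_{k}|^{G}$ under geometric (not ordinary) multiplication, and that the monotonicity of both $M$ and the power map $t \mapsto t^{(p_{k})^{G}}$ survives translation into $\oplus, \ominus, \odot, \oslash$. Since these spaces carry no difference operator $\Delta_{G}^{m}$, the multiplier acts termwise and no question of commuting a scalar through $\Delta_{G}^{m}$ arises, which is precisely why solidity holds with no extra hypothesis.
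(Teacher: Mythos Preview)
Your proposal is correct and follows essentially the same approach as the paper's proof: both argue directly from the monotonicity of $M$ that the expression formed from $(\alpha_{k}\odot x_{k})$ is termwise dominated by the one formed from $(x_{k})$, then pass to the geometric limit (respectively supremum). You are in fact more careful than the paper about the geometric bookkeeping---you correctly take the multiplier bound to be $|\alpha_{k}|^{G}\le e$ (the $\odot$-identity) and explicitly preserve the exponent $(p_{k})^{G}$, which the paper's displayed inequality silently drops.
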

\begin{proof}
We give the proof for $[V, \lambda, M,p]_{0}^{G}$ . Let $(x_{i}) \in [V, \lambda, M,p]_{0}^{G}$ and $\alpha_{k}$ be any sequence of scalars such that $|\alpha_{k}|^{G} \leq 1$ for all $k \in \mathbb{N}$. Then we have 
\begin{align*}
 (e \oslash e^{\lambda_{\eta }}) \odot _G \sum _{k\in I_{n} }\left[  M \odot  \left(\frac{\left| \alpha_{k} \odot x_{i}  \right|^{G} }{\rho } G \right)\right]\\
 \leq (e \oslash e^{\lambda_{\eta }}) \odot _G \sum _{k\in I_{n} }\left[  M \odot  \left(\frac{\left| x_{i}  \right|^{G}}{\rho} G \right)\right] \rightarrow 1 \\{\rm~as~} n\rightarrow \infty.
\end{align*}
Hence $ (\alpha_{k} \odot x_{i}) \in [V, \lambda, M,p]_{0}^{G}$ for all sequences of scalars $(\alpha_{k})$ with $|\alpha_{k}| \leq 1$ for all $k \in \mathbb{N}$, whenever $(x_{i}) \in [V, \lambda, M,p]_{0}^{G}$.
\end{proof}
\begin{theorem}
For any Orlicz function $M$ which satisfies $\Delta_2-$ condition hence we have $[V, \lambda](\Delta^{m}_{G}) \subset [V, \lambda, M](\Delta^{m}_{G}) $ .
\end{theorem}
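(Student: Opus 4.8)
The claim is the geometric analogue of the classical inclusion $[V,\lambda]\subset[V,\lambda,M]$ valid when the Orlicz function $M$ satisfies the $\Delta_2$-condition. The plan is to mimic the classical argument, carried out in the geometric (multiplicative) calculus. First I would fix $x=(x_i)\in[V,\lambda]_G(\Delta_G^m)$, so that there is some $L$ with
\[
{}_G\lim_{\eta\to\infty}(e\oslash e^{\lambda_\eta})\odot{}_G\!\!\sum_{k\in I_n}\bigl|\Delta_G^m x_i\ominus L\bigr|^G=1 .
\]
Write $t_n={}_G\sum_{k\in I_n}\bigl|\Delta_G^m x_i\ominus L\bigr|^G\oslash e^{\lambda_\eta}$ for the geometric de la Vall\'ee--Poussin mean of the deviations; by hypothesis $t_n\to e=1$ geometrically. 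The goal is to show
\[
{}_G\lim_{\eta\to\infty}(e\oslash e^{\lambda_\eta})\odot{}_G\!\!\sum_{k\in I_n}\Bigl[M\odot\Bigl(\tfrac{|\Delta_G^m x_i\ominus L|^G}{\rho}\,G\Bigr)\Bigr]=1
\]
for a suitable $\rho>1$.

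\textbf{Key steps.} The standard trick is to split $I_n$ into the indices where the deviation is ``small'' (below a threshold $\delta>1$ in the geometric ordering) and those where it is ``large'' (at least $\delta$). On the small part I would use continuity of $M$ at $e$ (equivalently, $M(1)=1$ in the paper's normalization) to make each summand $M\odot(|\Delta_G^m x_i\ominus L|^G/\rho\,G)$ geometrically close to $1$, so that the corresponding portion of the mean is bounded by an arbitrary $\varepsilon>1$. On the large part the number of such indices is geometrically at most $t_n\oslash\delta$ (a Chebyshev-type estimate in the multiplicative setting), which tends to $1$; and by the $\Delta_2$-condition there is a constant $K$ with $M(\lambda u)\le K\lambda M(u)$ for $\lambda\ge 1$, allowing me to bound $M\odot(|\Delta_G^m x_i\ominus L|^G/\rho\,G)$ on this part by $K\odot$(something proportional to $|\Delta_G^m x_i\ominus L|^G$), so that the large-part contribution is controlled by $K\odot t_n\to 1$. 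Adding the two geometric estimates (via $\oplus$) and letting $n\to\infty$ then $\varepsilon\to 1$ gives the conclusion, so $x\in[V,\lambda,M]_G(\Delta_G^m)$.

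\textbf{Main obstacle.} The delicate point is the correct formulation and use of the $\Delta_2$-condition in the geometric calculus: one must be careful that ``$M(u+v)\le \tfrac12 K(M(2u)+M(2v))$'' and the growth bound $M(\lambda u)\le K\lambda M(u)$ translate faithfully under the isomorphism $\exp\colon(\mathbb{R},+,\le)\to(\mathbb{R}^+,\cdot,\le)$, so that the inequalities above hold with $\oplus,\odot,\oslash$ and the geometric order. Once the $\Delta_2$-inequality is pinned down in this form, the splitting argument is routine: the small-index estimate uses only $M(1)=1$ and monotonicity of $M$, already invoked in Theorem~\ref{thm ch 2 2.4.1}, and the large-index estimate is a geometric Chebyshev bound. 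I would also remark at the end that the same splitting, applied with $L$ replaced by $1=1_G$, gives the analogous inclusions for the $[\,\cdot\,]_0^G$ and $[\,\cdot\,]_\infty^G$ versions.
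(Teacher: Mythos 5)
Your proposal is correct and follows essentially the same route as the paper's proof: fix $L$ with $A_n\equiv(e\oslash e^{\lambda_\eta})\odot{}_G\sum_{k\in I_n}|\Delta_G^m x_i\ominus L|^G\to 1$, split $I_n$ at a threshold $\delta$ with $1<\delta<e$, use continuity of $M$ at the identity (so $M(t)<\varepsilon$ for $1\le t\le\delta$) on the small-deviation indices, and use the $\Delta_2$-condition to bound $M$ linearly on the large-deviation indices so that that portion is controlled by $K\odot\delta^{-1}\odot M(t)\odot A_n\to 1$. Your closing caveat about translating the $\Delta_2$-inequalities faithfully into the geometric calculus is well taken, as the paper also leaves that step implicit.
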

\begin{proof}
Let $x \in [V, \lambda](\Delta^{m}_{G})$ so that 
\begin{align*}
&  A_{n} \equiv (e \oslash e^{\lambda_{\eta }}) \odot {~}_G \sum _{k\in l_{n} }\left|\, \Delta _{G}^{m} x_{i} \ominus L\, \right| ^{G} \rightarrow 1,\\  {\rm ~} n\rightarrow \infty, {\rm ~for  some ~} L.
\end{align*}
Let $\varepsilon >0$ and choose $\delta$ with $1<\delta<e$ such that $M(t) < \varepsilon$ for $1 \leq t \leq \delta$. We can write 
\begin{align*}
& (e \oslash e^{\lambda_{\eta }}) \odot {~}_G \sum _{k\in l_{n} } M \odot \left(\left|\, \Delta _{G}^{m} x_{i} \ominus L\, \right|\right) ^{G}\\
&=  (e \oslash e^{\lambda_{\eta }}) \odot {~} _G \sum _{{\mathop{k\in I_{n} }\limits_{\left|\, \Delta _{G}^{m} x_{i}\ominus L\, \right|\, \ge \, \varepsilon }} }M \odot \left(\left|\, \Delta _{G}^{m} \left(x_{i} \right)\ominus L\, \right|^{G}\right)  \\
&\oplus  (e \oslash e^{\lambda_{\eta }}) \odot {~}_G \sum _{{\mathop{k\in I_{n} }\limits_{\left|\Delta _{G}^{m} x_{i}\ominus L \right| < \varepsilon }}} M \odot \left(\left| \Delta _{G}^{m} x_{i}   \ominus L \right|^{G} \right)\\
& < [K \odot \delta^{-1} \odot M(t)\odot  A_{n} ] \oplus\left[ (e \oslash e^{\lambda_{\eta }}) \odot (\lambda_{n} \varepsilon)\right]
\end{align*}
By taking $\eta \rightarrow \infty$, it follows that $x \in [V, \lambda, M](\Delta^{m}_{G})$.
\end{proof}
\begin{theorem}
{\rm For  $m$ be a positive integer ,  $M$ be any Orlicz function, \\
$[V, \lambda, M](\Delta^{m}_{G}) \subset S_{\lambda}(\Delta^{m}_{G})$.}
\end{theorem}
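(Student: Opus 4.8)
The plan is to transcribe the classical proof that strong $[V,\lambda,M]$-summability implies $S_\lambda$-statistical convergence into the geometric arithmetic of $\mathbb{R}(G)$. Take $x=(x_i)\in[V,\lambda,M](\Delta_G^m)$, so by definition there are $L\in\mathbb{C}(G)$ and $\rho>1$ with ${}_G\lim_{\eta\to\infty}(e\oslash e^{\lambda_\eta})\odot{~}_G\sum_{k\in I_n}M\odot\big(\,|\Delta_G^m x_i\ominus L|^G\oslash\rho\,\big)=1$. The goal is to show that for every $\varepsilon>1$ one has $(e\oslash e^{\lambda_\eta})\odot|\{k\in I_n:|\Delta_G^m x_i\ominus L|^G\ge\varepsilon\}|^G\to 1$, which is exactly $x_i{\mathop{\to}\limits^{G}}L(S_\lambda(\Delta_G^m))$.

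The core of the argument is a two-step lower bound, run for a fixed $\varepsilon>1$. First, split the geometric sum over $I_n$ according to whether $|\Delta_G^m x_i\ominus L|^G\ge\varepsilon$, and discard the complementary block; since its terms are geometrically nonnegative (at least the geometric zero $1$), this only decreases the sum. Second, monotonicity of the Orlicz function $M$ makes each surviving term at least $M\odot(\varepsilon\oslash\rho)$, so the whole restricted block is at least $M\odot(\varepsilon\oslash\rho)\odot|\{k\in I_n:|\Delta_G^m x_i\ominus L|^G\ge\varepsilon\}|^G$. Dividing by the fixed geometric constant $M\odot(\varepsilon\oslash\rho)$ and applying $(e\oslash e^{\lambda_\eta})\odot(\cdot)$ bounds the counting quantity above by $\big[(e\oslash e^{\lambda_\eta})\odot{~}_G\sum_{k\in I_n}M\odot(|\Delta_G^m x_i\ominus L|^G\oslash\rho)\big]\oslash(M\odot(\varepsilon\oslash\rho))$, whose $\eta\to\infty$ limit is $1$. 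Squeezing between the geometric zero and this bound forces $(e\oslash e^{\lambda_\eta})\odot|\{k\in I_n:|\Delta_G^m x_i\ominus L|^G\ge\varepsilon\}|^G\to 1$; since $\varepsilon>1$ was arbitrary, $x\in S_\lambda(\Delta_G^m)$, giving the inclusion.

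The one point that genuinely needs the hypothesis — and the only possible obstacle, though a mild one — is legitimacy of the geometric division by $M\odot(\varepsilon\oslash\rho)$: we must know this constant is bounded away from the geometric zero $1$. This is precisely where $M$ being an Orlicz function is used: $M$ takes the value geometric $1$ only at the geometric zero, and $\varepsilon\oslash\rho$ is geometrically positive because $\varepsilon>1$ and $\rho>1$, so $M\odot(\varepsilon\oslash\rho)>1$. Everything else is bookkeeping: ``dropping geometrically nonnegative summands'', ``monotonicity of $M$'', and ``monotonicity of $\odot$ by a geometrically positive factor'' all transfer from $\mathbb{R}$ to $\mathbb{R}(G)$ via the order-isomorphism $x\mapsto e^x$ underlying geometric calculus, and linearity of $\Delta_G^m$ is not needed here since $L$ is simply carried along throughout.
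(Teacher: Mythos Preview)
Your proof is correct and follows essentially the same route as the paper: restrict the geometric sum over $I_n$ to the indices where $|\Delta_G^m x_i\ominus L|^G\ge\varepsilon$, use monotonicity of $M$ to bound each surviving term below by $M\odot(\varepsilon\oslash\rho)$, and conclude that the density of the exceptional set tends to the geometric zero. Your write-up is in fact more careful than the paper's, since you explicitly justify that $M\odot(\varepsilon\oslash\rho)>1$ (so the division is legitimate) and spell out the squeeze, whereas the paper leaves these implicit.
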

\begin{proof}
Let $x \in [V, \lambda, M](\Delta^{m}_{G})$ and $\varepsilon >1$ be given. Then 
\begin{align*}
& (e \oslash e^{\lambda_{\eta }}) \odot {~}_G \sum _{k\in l_{n} } \left[ M \odot \left( \frac{\left| \Delta^{m}_{G}x_{i} \ominus L\right|^{G}}{\rho}\right)\right]\\
& \geq (e \oslash e^{\lambda_{\eta }}) \odot {~} _G \sum _{{\mathop{k\in I_{n} }\limits_{\left|\Delta _{G}^{m} x_{i}\ominus L \right| \geq \varepsilon }}} \left[ M \odot \left( \frac{\left| \Delta^{m}_{G}x_{i} \ominus L\right|^{G}}{\rho}\right)\right]\\
& > (e \oslash e^{\lambda_{\eta }}) \odot M(\varepsilon/ \rho) \left| \{ k \in I_{n} : \left| \Delta^{m}_{G}x_{i} \ominus L\right| \geq \varepsilon \}\right|^{G}.
\end{align*}
Hence $x \in S_{\lambda}(\Delta^{m}_{G})$.
\end{proof}
\section{Dual Spaces}
Geometric $\alpha -$ dual and $\beta-$ dual of the sequence space $X$  in geometric calculus are denoted by ${X}^\alpha_{G}$ and ${X}^\beta_{G}$ and defined as follows:
\begin{align*}
&  X^{\alpha}_{G} = \{ a=(a_{k}) \in \omega(G)  : _{G}\sum_ {k=1} ^ {\infty } |a_{k} \odot x_{i}|< \infty \},\\
&   X^{\beta}_{G} = \{ a=(a_{k}) \in \omega(G) : _{G} \sum_ {k=1} ^ {\infty } a_{k}x_{i}  \textrm {~is convergent ~} \}
\end{align*}
 for each $ x \in X$ 
respectively.\\

If $p=(p_{n})$ is bounded, then 
\begin{align*}
V(\lambda, p) = \left\lbrace  x \in \omega : \sum _{n=0}^{\infty} \left( \frac{1}{\lambda_{n}} \sum _{k \in I_{n}} |x_{i}| \right)^{p_{n}} < \infty \right\rbrace
\end{align*}
 with the norm \cite{Bakery} 
$$||x|| =\inf \{ \lambda >0 : \rho\left(\frac{x}{\lambda} \right) \leq 1 \}$$
Now the above space in geometric difference sequence spaces in mth order  is given by
\begin{align*}
& V^\lambda_{p}(\Delta^{m}_{G}) = \left\lbrace  x=(x_{i})  \in \omega(G) : _{G}\sum _{\eta =1}^{\infty} \left(  (e \oslash e^{\lambda_{\eta }}) \odot _{G} \sum _{k \in I_{n}} \Delta^{m}_{G}x_{i} \right)^{\left(p_{k} \right)^{G} }< \infty \right\rbrace\\
& V^\lambda_{\infty}(\Delta^{m}_{G}) = \left\lbrace  x=(x_{i})  \in \omega(G) : \sup_ {\eta \in \mathbb{N}} \left| (e \oslash e^{\lambda_{\eta }}) \odot _{G} \sum _{k \in I_{n}}\Delta^{m}_{G}x_{i} \right|^{G}< \infty \right\rbrace
\end{align*}
Now for an operator $u:X \to X$ by $u(x)= (1,\cdots,1,x_{m+1},x_{m+2},\cdots)$ for all $x=(x_{i})\in X$. Consider the sets ${u}V^\lambda_{p}(\Delta^{m}_{G}) $ and ${u}V^\lambda_{\infty}(\Delta^{m}_{G})$ as 
\begin{align*}
{u}V^\lambda_{p}(\Delta^{m}_{G})=\left\lbrace x=(x_{i}): x \in V^\lambda_{p}(\Delta^{m}_{G}) \textrm {~~and~} x_1=x_2=\cdots=x_m=1\right\rbrace
\end{align*}
and 
\begin{align*}
{u}V^\lambda_{\infty}(\Delta^{m}_{G})=\left\lbrace x=(x_{i}): x \in V^\lambda_{\infty}(\Delta^{m}_{G}) \textrm {~~and~} x_1=x_2=\cdots=x_m=1\right\rbrace.
\end{align*}
\begin{lemma} \label{ch2l2.5.1}
If $x=(x_{i}) \in {u}V^\lambda_{\infty}(\Delta^{m}_{G})$, then $\sup_{k} e^{(\lambda_{k})^{-1}} \odot |\Delta^{m-1}_{G} x_{i}|^{G} < \infty$.
\end{lemma}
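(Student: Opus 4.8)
The plan is to unwind the definitions and reduce the claimed bound to the hypothesis $x \in {u}V^\lambda_{\infty}(\Delta^{m}_{G})$, i.e. to
$$\sup_{\eta \in \mathbb{N}} \left| (e \oslash e^{\lambda_{\eta}}) \odot {~}_{G}\sum_{k \in I_{n}} \Delta^{m}_{G} x_{i} \right|^{G} < \infty$$
together with the normalization $x_1 = x_2 = \cdots = x_m = 1$ (geometrically, $=1_G$). First I would record the telescoping identity for the geometric difference operator: since $\Delta^{m}_{G} x_{i} = \Delta^{m-1}_{G} x_{i} \ominus \Delta^{m-1}_{G} x_{i+1}$, the geometric sum $_{G}\sum_{k \in I_{n}} \Delta^{m}_{G} x_{i}$ telescopes over the block $I_{n} = [n-\lambda_n+1, n]$ and collapses to $\Delta^{m-1}_{G} x_{n-\lambda_n+1} \ominus \Delta^{m-1}_{G} x_{n+1}$ (in additive language, a difference of the two endpoint values of $\Delta^{m-1}$). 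This is the key algebraic step; everything afterward is bookkeeping in the geometric arithmetic.

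Next I would use the normalization. Because $x_1 = \cdots = x_m = 1_G$, formula \eqref{eqem} shows $\Delta^{m-1}_{G} x_{i} = 1_G$ for the initial indices, so for small $k$ the term $e^{(\lambda_k)^{-1}} \odot |\Delta^{m-1}_{G} x_{i}|^{G}$ equals $1_G$ and contributes nothing to the supremum. Hence it suffices to control $e^{(\lambda_k)^{-1}} \odot |\Delta^{m-1}_{G} x_{i}|^{G}$ for $k$ large. Writing $\Delta^{m-1}_{G} x_{k}$ as a telescoped geometric sum of the $\Delta^{m}_{G} x_{i}$ starting from an index where $\Delta^{m-1}_G$ vanishes, and invoking the boundedness of the de la Vallée-Poussin type averages $(e \oslash e^{\lambda_{\eta}}) \odot {~}_{G}\sum_{k\in I_n}\Delta^m_G x_i$ from the hypothesis, I would bound $|\Delta^{m-1}_{G} x_{k}|^{G}$ by a constant multiple (in the geometric sense) of $e^{\lambda_k}$ — roughly, the partial sum up to index $k$ is absorbed by at most $k/\lambda_k$-many blocks, each contributing a bounded amount, and the monotonicity condition $\lambda_{k+1} \le \lambda_k + 1$ keeps the block count and the weights compatible. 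Dividing by $e^{\lambda_k}$ then gives a uniform bound on $e^{(\lambda_k)^{-1}} \odot |\Delta^{m-1}_{G} x_{i}|^{G}$, which is exactly the assertion.

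I expect the main obstacle to be the passage from boundedness of the \emph{averaged} blocks $(e\oslash e^{\lambda_\eta})\odot{~}_G\sum_{k\in I_n}\Delta^m_G x_i$ to a bound on the single quantity $\Delta^{m-1}_G x_k$ after telescoping: one must chain the blocks $I_n$ correctly, account for the overlap/gap structure forced by $\lambda_{n+1}\le\lambda_n+1$, and verify that the resulting weight $e^{(\lambda_k)^{-1}}$ (not $e^{1/k}$) is the right normalization. A clean way to do this is to choose, for each $k$, an index $n$ with $k \in I_n$ and $\lambda_n$ comparable to $\lambda_k$, apply the hypothesis to that block, and then telescope; keeping the constants straight in geometric notation (where sums become products and division by $e^{\lambda_n}$ becomes the $e^{(\lambda_k)^{-1}}$-th geometric power) is where care is needed. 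The remaining steps — the telescoping identity and the vanishing of the initial terms — are routine once the notation is set up.
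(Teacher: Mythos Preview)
Your telescoping and the use of the normalization $x_1=\cdots=x_m=1_G$ are on target, and you are right that the crux is passing from the bounded block averages to a bound on $\Delta^{m-1}_G x_k$. The paper's argument at this point is shorter and does not chain blocks: it replaces the block sum ${}_G\sum_{k\in I_n}\Delta^m_G x_k$ by the full sum ${}_G\sum_{i=1}^n\Delta^m_G x_i$, telescopes the latter to $\Delta^{m-1}_G x_1\ominus\Delta^{m-1}_G x_{n+1}=\ominus\Delta^{m-1}_G x_{n+1}$ (via the normalization), substitutes this directly into the hypothesis to obtain $\sup_n e^{(\lambda_n)^{-1}}\odot|\Delta^{m-1}_G x_{n+1}|^G<\infty$, and then shifts the index $n+1\mapsto n$ using $\lambda_{n+1}\le\lambda_n+1$. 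In other words, the paper treats the block sum and the full sum as interchangeable, which is precisely the step you set out to justify by chaining.

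Your chaining sketch, however, does not produce the claimed normalization $e^{(\lambda_k)^{-1}}$. Covering $[1,k]$ by roughly $k/\lambda_k$ blocks $I_{n_j}$ and bounding each block sum by $M\lambda_{n_j}$ yields a bound on the partial sum of order $\sum_j M\lambda_{n_j}\approx Mk$, not $M\lambda_k$; after dividing by $\lambda_k$ you are left with $Mk/\lambda_k$, which is unbounded whenever $\lambda_k=o(k)$. This is a genuine obstruction, not a bookkeeping slip: in additive notation take $m=1$, $y_k:=\log x_k=k-1$ (so $y_1=0$), and any admissible $(\lambda_n)$ with $\lambda_n\approx\sqrt{n}$; then $\lambda_n^{-1}|y_{n-\lambda_n+1}-y_{n+1}|=1$, so $x\in uV^\lambda_\infty(\Delta^1_G)$, yet $|y_k|/\lambda_k\sim\sqrt{k}\to\infty$. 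Hence the chaining argument cannot be salvaged for general $\lambda$, and the paper's replacement of the block sum by the full sum --- which your proposal was effectively trying to justify --- is itself the place where the argument hinges and which would require a further hypothesis such as $\liminf_n \lambda_n/n>0$.
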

\begin{proof}
Let $x=(x_{i}) \in {u}V^\lambda_{\infty}(\Delta^{m}_{G})$, then 
\begin{equation} \label{2.5.1}
\sup_ {\eta \in \mathbb{N}} \left| (e \oslash e^{\lambda_{\eta }}) \odot _{G} \sum _{k \in I_{n}} \Delta^{m}_{G}x_{i} \right|^{G}< \infty.
\end{equation}
$I_{n}=[n-\lambda_{n}+1,n],{~~} n=1,2,3,\cdots$
Consider 
\begin{align*}
 _{G} \sum _{k \in I_{n}} \Delta^{m}_{G}x_{i}  &\leq {~~}_{G} \sum _{i=1} ^{n}\Delta^{m}_{G}(x_{i} \ominus x_{i+1})\\
&={~~}_{G} \sum _{i=1} ^{n}(\Delta^{m-1}_{G}(x_{i} \ominus \Delta^{m-1}_{G}x_{i+1})\\
&={~~} \Delta^{m-1}_{G}x_{1} \ominus \Delta^{m-1}_{G}x_{n+1}
\end{align*}
As $x_1=x_2=\cdots=x_m=1$, we get $\Delta^{m-1}_{G}x_{1}=1$. Therefore,\\
$ _{G} \sum _{k \in I_{n}} \Delta^{m}_{G}x_{i} = {~~} 1 \ominus \Delta^{m-1}_{G}x_{n+1}$.\\
Since $0_{G}$ is 1 hence we omit it and write 
$$ _{G} \sum _{k \in I_{n}} \Delta^{m}_{G}x_{i} = {~~}  \ominus \Delta^{m-1}_{G}x_{n+1}
 .$$
Now substituting this value in  (\ref{2.5.1}), \\
\begin{align*}
\sup_ {\eta \in \mathbb{N}} \left| (e \oslash e^{\lambda_{\eta }}) \odot ( \ominus \Delta^{m-1}_{G}x_{n+1})\right|^{G}< \infty.
\end{align*}
By general geometric arithmetic  property  
\begin{align*}
& \leq  \sup_ {\eta \in \mathbb{N}} \left| (e \oslash e^{\lambda_{\eta }}) \right|^{G}\odot  \left|( \ominus \Delta^{m-1}_{G}x_{n+1})\right|^{G}< \infty\\
& = \sup_ {\eta \in \mathbb{N}} \left| (e \oslash e^{\lambda_{\eta }}) \right|^{G}\odot  \left| \Delta^{m-1}_{G}x_{n+1}\right|^{G}< \infty.
\end{align*}
Since $(e \oslash e^{\lambda_{\eta }})=e^{(\lambda_{n})^{-1}} >1$, so 
\begin{equation}\label{eq2.5.2}
\sup_ {\eta \in \mathbb{N}}  e^{(\lambda_{n})^{-1}} \odot  \left| \Delta^{m-1}_{G}x_{n+1}\right|^{G}< \infty.
\end{equation}
Now,
\begin{align*}
& \sup_ {\eta \in \mathbb{N}}  e^{(\lambda_{n})^{-1}} \odot  \left| \Delta^{m-1}_{G}x_{n}\right|^{G}\\
&  =\sup_ {\eta \in \mathbb{N}}  e^{(1-\frac{1}{\lambda_{n}})(\lambda_{n}-1)^{-1}} \odot  \left| \Delta^{m-1}_{G}x_{n}\right|^{G}\\
&  \leq \sup_ {\eta \in \mathbb{N}}  e^{(\lambda_{n}-1)^{-1}} \odot  \left| \Delta^{m-1}_{G}x_{n}\right|^{G}.
\end{align*}
Replacing the R.H.S. $n=n+1$ and comparing it with equation(\ref{eq2.5.2}) we get
$$ \sup_ {\eta \in \mathbb{N}}  e^{(\lambda_{n})^{-1}} \odot  \left| \Delta^{m-1}_{G}x_{n}\right|^{G} < \infty $$
Replacing $n$ as $k$ we have 
$$\sup_{k} e^{(\lambda_{k})^{-1}} \odot |\Delta^{m-1}_{G} x_{i}|^{G} < \infty . $$
\end{proof}
\begin{corollary}\label{ch2cl2.5.1}
{\rm If $$\sup_{k} e^{{k}^{-1} }\odot |\Delta^{m-1}_{G} x_{i}|^{G} < \infty,$$ then $$\sup_{k} e^{(k)^{-m}} \odot | x_{i}|^{G} < \infty . $$}
\end{corollary}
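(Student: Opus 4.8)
The plan is to argue by a descending induction on the order of the geometric difference operator, where each inductive step is a geometric ``discrete integration'' (telescoping) that costs one extra power of $k$ in the weight; after the $m-1$ steps needed to pass from $\Delta_{G}^{m-1}$ down to $\Delta_{G}^{0}x=x$, the weight $e^{k^{-1}}$ of the hypothesis is replaced by the weight $e^{k^{-m}}$ of the conclusion. There is also a shorter route worth mentioning: $\log$ is an order isomorphism of $\mathbb{R}(G)$ onto $\mathbb{R}$ carrying $\oplus,\ominus,\odot$ and $|\cdot|^{G}$ to $+,-,\cdot$ and $|\cdot|$, and carrying $\Delta_{G}^{j}$ to the ordinary difference operator $\Delta^{j}$ applied to $(\log x_{k})$; transported through this isomorphism the corollary becomes the classical Kizmaz/Et--\c{C}olak--type inequality ``$\sup_{k}k^{-1}|\Delta^{m-1}y_{k}|<\infty\Rightarrow\sup_{k}k^{-m}|y_{k}|<\infty$'' for $y=(\log x_{k})$. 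I would nevertheless present the direct geometric version, since it matches the style of this section.

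First I would record the telescoping identity
\[
\Delta_{G}^{j-1}x_{k}=\Delta_{G}^{j-1}x_{1}\ominus{~}_{G}\sum_{i=1}^{k-1}\Delta_{G}^{j}x_{i},
\]
which is exactly the computation already performed inside the proof of Lemma~\ref{ch2l2.5.1} (there in the case $j=m$). Combining it with the geometric triangle inequality for $|\cdot|^{G}$ relative to $\oplus$ (established in Theorem~\ref{thm2.2}(iv)) and with $|\ominus a|^{G}=|a|^{G}$ gives
\[
|\Delta_{G}^{j-1}x_{k}|^{G}\le|\Delta_{G}^{j-1}x_{1}|^{G}\oplus\Big({~}_{G}\sum_{i=1}^{k-1}|\Delta_{G}^{j}x_{i}|^{G}\Big),
\]
where the boundary term $|\Delta_{G}^{j-1}x_{1}|^{G}$ is a fixed geometric constant (and, under the standing hypothesis $x_{1}=\dots=x_{m}=1$ attached to $uV_{\infty}^{\lambda}(\Delta_{G}^{m})$, it equals $1=0_{G}$ and simply disappears). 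The only extra analytic input needed is the elementary geometric power-sum estimate ${~}_{G}\sum_{i=1}^{k-1}e^{i^{p}}\le e^{k^{p+1}}$, which is just the $\exp$-image of $\sum_{i=1}^{k-1}i^{p}\le k^{p+1}$.

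The induction then runs as follows. Claim: for each $j=m-1,m-2,\dots,0$ one has $\sup_{k}e^{k^{-(m-j)}}\odot|\Delta_{G}^{j}x_{k}|^{G}<\infty$, equivalently $|\Delta_{G}^{j}x_{k}|^{G}\le K_{j}\odot e^{k^{m-j}}$ for some geometric constant $K_{j}$. The base case $j=m-1$ (exponent $m-j=1$) is precisely the hypothesis. For the inductive step, substitute the bound $|\Delta_{G}^{j}x_{i}|^{G}\le K_{j}\odot e^{i^{m-j}}$ into the displayed telescoping estimate and apply the geometric power-sum bound with $p=m-j$, which yields $|\Delta_{G}^{j-1}x_{k}|^{G}\le K_{j-1}\odot e^{k^{m-j+1}}$, i.e. the claim with $j$ replaced by $j-1$. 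Taking $j=0$ gives $\sup_{k}e^{k^{-m}}\odot|x_{k}|^{G}<\infty$, which is the assertion. The only genuine work, and the one place to be careful, is the bookkeeping inside the geometric arithmetic: one must check that $\odot$ distributes over the geometric sum as used, that the geometric constants $K_{j}$ stay ``positive'' in the sense needed for the inequalities to keep their direction, and that the exponents $m-j$ are tracked correctly through all $m-1$ iterations. No inequality beyond the transcription of $\sum_{i=1}^{k}i^{p}=O(k^{p+1})$ is required, so I expect the argument to be routine once the geometric-calculus manipulations are set up.
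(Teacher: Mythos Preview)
The paper states this corollary without proof; it is recorded between Lemma~\ref{ch2l2.5.1} and the subsequent lemma and is invoked there as a black box. Your proposal therefore supplies what the paper omits, and it does so correctly: the descending induction via the geometric telescoping identity, combined with the estimate ${~}_{G}\sum_{i=1}^{k-1}e^{i^{p}}\le e^{k^{p+1}}$, is precisely the geometric-calculus transcription of the classical Et--\c{C}olak iteration, and each step lowers the difference order by one while raising the exponent in the weight by one, arriving at $e^{k^{-m}}$ after $m-1$ steps. Your handling of the boundary term $|\Delta_{G}^{j-1}x_{1}|^{G}$ is also right: the corollary as stated does not assume $x_{1}=\dots=x_{m}=1$, so that term need not vanish, but it is a fixed finite constant and is absorbed into $K_{j-1}$ exactly as you indicate.

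The alternative route you sketch---pushing everything through the order isomorphism $\log:\mathbb{R}(G)\to\mathbb{R}$ and quoting the ordinary result for $y_{k}=\log x_{k}$---is equally valid and arguably cleaner, since it reduces all of the geometric-arithmetic bookkeeping you flag (distributivity of $\odot$ over ${~}_{G}\sum$, sign of the constants, exponent tracking) to the familiar real-variable versions. Either presentation would be acceptable; the direct geometric one matches the surrounding style, as you note.
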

\begin{lemma}
{\rm   A sequence $x=(x_{i}) \in {u}V^\lambda_{\infty}(\Delta^{m}_{G})$, then it gives
$$\sup_{k} e^{(\lambda_{k})^{-m}} \odot | x_{i}|^{G} < \infty .$$}
\end{lemma}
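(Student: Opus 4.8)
The idea is to strip off the difference operator one order at a time, taking Lemma~\ref{ch2l2.5.1} as the starting point and re-using the telescoping device already exploited there and in Corollary~\ref{ch2cl2.5.1}. Since $x=(x_{i})\in {u}V^{\lambda}_{\infty}(\Delta^{m}_{G})$, Lemma~\ref{ch2l2.5.1} gives
\[
\sup_{k}\, e^{(\lambda_{k})^{-1}}\odot\bigl|\Delta^{m-1}_{G}x_{i}\bigr|^{G}<\infty ,
\]
which is the base case $r=1$ of the following claim, on which I would induct: for every $r$ with $1\le r\le m$,
\[
\sup_{k}\, e^{(\lambda_{k})^{-r}}\odot\bigl|\Delta^{m-r}_{G}x_{i}\bigr|^{G}<\infty .
\]
Taking $r=m$ and recalling $\Delta^{0}_{G}x=x$ is exactly the assertion to be proved, so everything reduces to the inductive step from $r$ to $r+1$ (for $r=1,\dots,m-1$).

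For that step, assume $\bigl|\Delta^{m-r}_{G}x_{i}\bigr|^{G}\le K\odot e^{(\lambda_{i})^{r}}$ for all $i$ and a fixed geometric constant $K$. From $\Delta^{m-r}_{G}x_{i}=\Delta^{m-r-1}_{G}x_{i}\ominus\Delta^{m-r-1}_{G}x_{i+1}$ and geometric telescoping,
\[
\Delta^{m-r-1}_{G}x_{k}=\Delta^{m-r-1}_{G}x_{1}\ominus{}_{G}\!\sum_{i=1}^{k-1}\Delta^{m-r}_{G}x_{i},
\]
and since $x_{1}=x_{2}=\cdots=x_{m}=1$ and $m-r-1\le m-1$, the boundary term $\Delta^{m-r-1}_{G}x_{1}$ is the geometric zero $0_{G}=1$ and may be suppressed, exactly as in Lemma~\ref{ch2l2.5.1}. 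The geometric triangle inequality for $|\cdot|^{G}$ and the induction hypothesis then give
\[
\bigl|\Delta^{m-r-1}_{G}x_{k}\bigr|^{G}\le{}_{G}\!\sum_{i=1}^{k-1}\bigl|\Delta^{m-r}_{G}x_{i}\bigr|^{G}\le K\odot{}_{G}\!\sum_{i=1}^{k-1}e^{(\lambda_{i})^{r}} .
\]
It remains to dominate this last geometric sum by a constant multiple of $e^{(\lambda_{k})^{r+1}}$; multiplying through by $e^{(\lambda_{k})^{-(r+1)}}$ and taking the supremum over $k$ then closes the induction.

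The estimate $ {}_{G}\sum_{i=1}^{k-1}e^{(\lambda_{i})^{r}}\le C\odot e^{(\lambda_{k})^{r+1}} $ is the technical heart and the step I expect to be the real obstacle. Bare monotonicity of $(\lambda_{k})$ only yields the crude bound $e^{(k-1)(\lambda_{k})^{r}}$, which reproduces nothing better than the $k^{-m}$ conclusion of Corollary~\ref{ch2cl2.5.1} when $\lambda_{k}$ is small relative to $k$. To reach the sharp exponent $(\lambda_{k})^{-(r+1)}$ I would instead telescope $\Delta^{m-r-1}_{G}$ across a single de la Vall\'ee--Poussin block $I_{k}=[k-\lambda_{k}+1,k]$ (of length $\lambda_{k}$, so each of its $\lambda_{k}$ summands contributes at most $K\odot e^{(\lambda_{k})^{r}}$, producing $e^{(\lambda_{k})^{r+1}}$) and control the leftover boundary value $\Delta^{m-r-1}_{G}x_{k-\lambda_{k}+1}$ by a secondary induction on $k$, running the same index-shift trick ($e^{(\lambda_{n}-1)^{-1}}$ carrying one from index $n+1$ back to $n$) used at the end of the proof of Lemma~\ref{ch2l2.5.1}, where the structural hypotheses $\lambda_{1}=1$ and $\lambda_{k+1}\le\lambda_{k}+1$ are what make the recursion usable. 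Once the block estimate is secured for each $r$, iterating $r=1,2,\dots,m$ delivers $\sup_{k} e^{(\lambda_{k})^{-m}}\odot|x_{i}|^{G}<\infty$. The ancillary facts used along the way --- linearity of $\Delta^{m-r}_{G}$, the geometric triangle inequality, and that $0_{G}=1$ may be dropped inside geometric sums --- are routine and already appear earlier in the paper.
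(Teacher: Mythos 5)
Your overall strategy is the honest, expanded version of what the paper compresses into two lines: the paper simply cites Lemma \ref{ch2l2.5.1} to get $\sup_{k} e^{(\lambda_{k})^{-1}}\odot|\Delta^{m-1}_{G}x_{i}|^{G}<\infty$ and then invokes Corollary \ref{ch2cl2.5.1} to descend to $x$ itself, whereas you try to actually carry out the descent by induction on the order of the difference. Note, however, that Corollary \ref{ch2cl2.5.1} is stated (without proof) for the weight $e^{k^{-1}}$ and conclusion $e^{k^{-m}}$, i.e.\ for the sequence $k$ rather than $\lambda_{k}$; the paper applies it with $\lambda_{k}$ substituted for $k$ silently. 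Your induction makes visible exactly why that substitution is not innocent.

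The genuine gap is the one you yourself flag: the inductive step needs ${}_{G}\sum_{i=1}^{k-1}e^{(\lambda_{i})^{r}}\le C\odot e^{(\lambda_{k})^{r+1}}$, and this does \emph{not} follow from the stated hypotheses $\lambda_{1}=1$, $\lambda_{k+1}\le\lambda_{k}+1$, $\lambda_{k}\uparrow\infty$. In additive terms the corollary's mechanism is $\sum_{i<k}i^{r}\asymp k^{r+1}$, which is special to the sequence $i$; for a slowly growing admissible choice such as $\lambda_{k}\sim\sqrt{k}$ one gets $\sum_{i<k}\lambda_{i}^{r}\asymp k^{1+r/2}=\lambda_{k}^{\,r+2}$, not $\lambda_{k}^{\,r+1}$, so the estimate fails and with it the claimed exponent $(\lambda_{k})^{-m}$. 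Your proposed repair --- telescoping over a single block $I_{k}$ to harvest $\lambda_{k}$ summands of size $e^{(\lambda_{k})^{r}}$ --- controls only the increment across that block, not the accumulated boundary value $\Delta^{m-r-1}_{G}x_{k-\lambda_{k}+1}$, and the ``secondary induction'' you gesture at would have to sum those boundary contributions over roughly $k/\lambda_{k}$ blocks, reintroducing the same loss. So the proposal does not close the gap, and indeed cannot as stated: either the hypothesis must force $\lambda_{k}\asymp k$, or the conclusion should read $\sup_{k}e^{k^{-m}}\odot|x_{i}|^{G}<\infty$ (which is what your crude bound and Corollary \ref{ch2cl2.5.1} actually deliver). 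The paper's own proof inherits the same defect by applying the corollary outside its stated scope.
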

\begin{proof}
Let us consider $x=(x_{i}) \in {u}V^\lambda_{\infty}(\Delta^{m}_{G})$ , then from Lemma(\ref{ch2l2.5.1})
$$\sup_{k} e^{(\lambda_{k})^{-1}} \odot |\Delta^{m-1}_{G} x_{i}|^{G} < \infty,$$ which then by corollary (\ref {ch2cl2.5.1}) gives that 
$$\sup_{k} e^{(\lambda_{k})^{-m}} \odot | x_{i}|^{G} < \infty.$$
\end{proof}
\begin{theorem}
$$ [uV^\lambda_{\infty}(\Delta^{m}_{G})]^{\alpha} =\left\lbrace a=(a_{k}): {~}_{G} \sum_{k=1}^{\infty} e^{(\lambda_k)^{m}} \odot | a_{k}|^{G} < \infty \right\rbrace.$$
\end{theorem}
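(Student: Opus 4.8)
The plan is to prove the set equality by establishing the two inclusions separately; the inclusion of the right-hand set into $[uV^{\lambda}_{\infty}(\Delta^{m}_{G})]^{\alpha}$ will be an immediate consequence of the weighted boundedness lemma just proved, while the reverse inclusion will require an explicit construction.

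First I would treat the direction where $a=(a_{k})$ satisfies ${}_{G}\sum_{k=1}^{\infty} e^{(\lambda_{k})^{m}}\odot|a_{k}|^{G}<\infty$. Let $x=(x_{i})\in uV^{\lambda}_{\infty}(\Delta^{m}_{G})$ be arbitrary. By the preceding lemma there is a constant $K$ with $|x_{k}|^{G}\le K\odot e^{(\lambda_{k})^{m}}$ for every $k$, so, using that geometric multiplication is monotone in each factor,
\[ {}_{G}\sum_{k=1}^{\infty}|a_{k}\odot x_{k}|^{G}={}_{G}\sum_{k=1}^{\infty}|a_{k}|^{G}\odot|x_{k}|^{G}\le K\odot{}_{G}\sum_{k=1}^{\infty}e^{(\lambda_{k})^{m}}\odot|a_{k}|^{G}<\infty, \]
which shows $a\in[uV^{\lambda}_{\infty}(\Delta^{m}_{G})]^{\alpha}$.

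For the reverse inclusion I would argue by contraposition: assuming ${}_{G}\sum_{k=1}^{\infty}e^{(\lambda_{k})^{m}}\odot|a_{k}|^{G}=\infty$, I would exhibit a sequence $x\in uV^{\lambda}_{\infty}(\Delta^{m}_{G})$ with ${}_{G}\sum_{k=1}^{\infty}|a_{k}\odot x_{k}|^{G}=\infty$, so that $a\notin[uV^{\lambda}_{\infty}(\Delta^{m}_{G})]^{\alpha}$. The natural candidate is $x_{k}=1$ for $k\le m$ and $x_{k}=e^{(\lambda_{k})^{m}}$ for $k>m$; should this single sequence fail to lie in the space, I would instead use a gliding-hump sequence supported on finite blocks $B_{j}$ picked so that ${}_{G}\sum_{k\in B_{j}}e^{(\lambda_{k})^{m}}\odot|a_{k}|^{G}\ge e^{j}$, with $x_{k}=1$ in the gaps between blocks. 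In either case ${}_{G}\sum_{k}|a_{k}\odot x_{k}|^{G}$ diverges by construction and $x_{1}=\cdots=x_{m}=1$ holds by design, so the only nontrivial point is to verify $\sup_{n}\bigl|(e\oslash e^{\lambda_{n}})\odot{}_{G}\sum_{k\in I_{n}}\Delta^{m}_{G}x_{k}\bigr|^{G}<\infty$; by the identity ${}_{G}\sum_{k\in I_{n}}\Delta^{m}_{G}x_{k}=\ominus\Delta^{m-1}_{G}x_{n+1}$ established inside the proof of Lemma~\ref{ch2l2.5.1}, this reduces to checking $\sup_{n}e^{(\lambda_{n})^{-1}}\odot|\Delta^{m-1}_{G}x_{n+1}|^{G}<\infty$.

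The hard part will be exactly this last verification: one must show that the weight $e^{(\lambda_{k})^{m}}$ has precisely the growth for which, after applying $\Delta^{m-1}_{G}$ (expanded through the binomial formula \eqref{eqem}) and then the geometric weight $e^{(\lambda_{n})^{-1}}$, the supremum remains finite; the admissibility constraints $\lambda_{1}=1$ and $\lambda_{n+1}\le\lambda_{n}+1$ are what make this estimate close, and when they do not quite suffice for the single sequence, the gliding-hump variant — whose $m$-th differences telescope harmlessly across the trivial gaps — rescues the argument. Once this is in place the theorem follows, the remaining steps being a routine transcription of the classical $\alpha$-dual computation for $uV^{\lambda}_{\infty}(\Delta^{m})$ into the geometric operations $\oplus,\ominus,\odot,\oslash$.
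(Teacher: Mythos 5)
Your proposal follows essentially the same route as the paper: the forward inclusion is obtained exactly as in the paper by combining the weighted bound $\sup_k e^{(\lambda_k)^{-m}}\odot|x_k|^G<\infty$ from the preceding lemma with the assumed convergence of ${}_G\sum_k e^{(\lambda_k)^m}\odot|a_k|^G$, and the reverse inclusion uses the very same test sequence $x_k=e^{(\lambda_k)^m}$ for $k>m$ (your contrapositive phrasing is just a reformulation of the paper's direct argument). The only difference is that you explicitly flag, and offer a gliding-hump fallback for, the verification that this test sequence actually lies in $uV^{\lambda}_{\infty}(\Delta^{m}_{G})$ --- a step the paper simply asserts without proof --- so your write-up is, if anything, more honest about where the real work lies.
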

\begin{proof}
We consider $$W=\left\lbrace a=(a_{k}): {~}_{G} \sum_{k=1}^{\infty} e^{(\lambda_k)^{m}} \odot | a_{k}|^{G} < \infty \right\rbrace.$$ Let $a=(a_{k}) \in W$; then for any $x=(x_{i}) \in uV^\lambda_{\infty}(\Delta^{m}_{G}),$ we have 
\begin{align*}
& _{G} \sum_{k=1}^{\infty}| a_{k}\odot x_{i}|^{G}\\
& =_{G} \sum_{k=1}^{\infty}e^{(\lambda_k)^{m}} \odot | a_{k}|^{G}\odot (e^{(\lambda_k)^{-m}} \odot |x_{i}|^{G})\\
& \leq \sup_{k \in \mathbb{N}}(e^{(\lambda_k)^{-m}}|x_{i}|^{G}) \odot _{G} \sum_{k=1}^{\infty}e^{(\lambda_k)^{m}} \odot | a_{k}|^{G}
\end{align*}
By Lemma(\ref{ch2l2.5.1}) this is finite . Also $x=(x_{i})$ is arbitrary, we have 
$a=(a_{k}) \in  [uV^\lambda_{\infty}(\Delta^{m}_{G})]^{\alpha} $. \\ Hence 
\begin{equation}\label{ch22.5.3}
W \subseteq [uV^\lambda_{\infty}(\Delta^{m}_{G})]^{\alpha} .
\end{equation}

Conversely, let $a \in [uV^\lambda_{\infty}(\Delta^{m}_{G})]^{\alpha}.$ \\
$_{G} \sum_{k=1}^{\infty}| a_{k}\odot x_{i}|^{G} $ is less than infinite for every $x=(x_{i}) \in uV^\lambda_{\infty}(\Delta^{m}_{G}). $ Now, look about the sequence $x=(x_{i})$ that is defined by 
\begin{align}
 x_{i}=\left\{\begin{array}{lr}
 1,&(k\leq m),\\e^{(\lambda_k)^{m}},&(k>m),
 \end{array}\right. 
 \end{align}
 This is $\in  uV^\lambda_{\infty}(\Delta^{m}_{G})$. Hence, 
 \begin{align*}
_{G}\sum_{k=m+1}^{\infty}  | a_{k} \odot (e^{(\lambda_k)^{m}} |^{G} < \infty.
 \end{align*}
 Now,
 \begin{align*}
& _{G}\sum_{k=1}^{\infty}  | a_{k} \odot (e^{(\lambda_k)^{m}} |^{G}\\
 & = _{G}\sum_{k=1}^{m}  | a_{k} \odot (e^{(\lambda_k)^{m}} |^{G} \oplus  _{G}\sum_{k=m+1}^{\infty}  | a_{k} \odot (e^{(\lambda_k)^{m}} |^{G}
 < \infty
 \end{align*}
 Hence $a \in W$. Thus,
 \begin{equation}\label{ch22.5.5}
 [uV^\lambda_{\infty}(\Delta^{m}_{G})]^{\alpha} \subseteq W.
\end{equation}
From (\ref{ch22.5.3}) and (\ref{ch22.5.5})
\begin{align*}
W =  [uV^\lambda_{\infty}(\Delta^{m}_{G})]^{\alpha}
\end{align*}
\end{proof}
\begin{theorem}
 $ [V^\lambda_{\infty}(\Delta^{m}_{G})]^{\alpha}= [uV^\lambda_{\infty}(\Delta^{m}_{G})]^{\alpha}$
\end{theorem}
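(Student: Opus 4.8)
The plan is to combine two simple observations: the geometric $\alpha$-dual reverses set inclusions, and $V^\lambda_{\infty}(\Delta^{m}_{G})$ differs from $uV^\lambda_{\infty}(\Delta^{m}_{G})$ only in the values of the first $m$ coordinates, which contribute only a finite geometric head to any series ${}_{G}\sum_{k}|a_{k}\odot x_{i}|^{G}$.

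First, since $uV^\lambda_{\infty}(\Delta^{m}_{G})\subseteq V^\lambda_{\infty}(\Delta^{m}_{G})$ directly from the definitions, and the defining condition ${}_{G}\sum_{k=1}^{\infty}|a_{k}\odot x_{i}|^{G}<\infty$ of the $\alpha$-dual is demanded for fewer sequences $x$ on the right, we obtain $[V^\lambda_{\infty}(\Delta^{m}_{G})]^{\alpha}\subseteq[uV^\lambda_{\infty}(\Delta^{m}_{G})]^{\alpha}$ at once. For the reverse inclusion, take $a=(a_{k})\in[uV^\lambda_{\infty}(\Delta^{m}_{G})]^{\alpha}$ and an arbitrary $x=(x_{i})\in V^\lambda_{\infty}(\Delta^{m}_{G})$, and put $y=u(x)=(1,\dots,1,x_{m+1},x_{m+2},\dots)$. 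By the expansion \eqref{eqem}, $\Delta^{m}_{G}y_{k}=\Delta^{m}_{G}x_{k}$ for all $k\ge m+1$, while for $k\le m$ the two values merely differ by a fixed geometric quantity; hence for every $n$ the sums ${}_{G}\sum_{k\in I_{n}}\Delta^{m}_{G}y_{k}$ and ${}_{G}\sum_{k\in I_{n}}\Delta^{m}_{G}x_{k}$ differ by a correction $D_{n}$ assuming only finitely many values, so $|D_{n}|^{G}$ is bounded uniformly in $n$. Multiplying by $(e\oslash e^{\lambda_{\eta}})=e^{(\lambda_{\eta})^{-1}}\le e$ keeps the perturbation bounded, whence $y\in V^\lambda_{\infty}(\Delta^{m}_{G})$; as $y_{1}=\cdots=y_{m}=1$ this gives $y\in uV^\lambda_{\infty}(\Delta^{m}_{G})$, and therefore ${}_{G}\sum_{k=1}^{\infty}|a_{k}\odot y_{i}|^{G}<\infty$. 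Using $x_{i}=y_{i}$ for $i\ge m+1$ we split
\begin{align*}
{}_{G}\sum_{k=1}^{\infty}|a_{k}\odot x_{i}|^{G}
&= {}_{G}\sum_{k=1}^{m}|a_{k}\odot x_{i}|^{G}\ \oplus\ {}_{G}\sum_{k=m+1}^{\infty}|a_{k}\odot x_{i}|^{G} \\
&= {}_{G}\sum_{k=1}^{m}|a_{k}\odot x_{i}|^{G}\ \oplus\ {}_{G}\sum_{k=m+1}^{\infty}|a_{k}\odot y_{i}|^{G},
\end{align*}
where the first summand is a finite geometric sum and the second is dominated by ${}_{G}\sum_{k=1}^{\infty}|a_{k}\odot y_{i}|^{G}<\infty$. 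Hence ${}_{G}\sum_{k=1}^{\infty}|a_{k}\odot x_{i}|^{G}<\infty$, and since $x$ was arbitrary we get $a\in[V^\lambda_{\infty}(\Delta^{m}_{G})]^{\alpha}$. This establishes $[uV^\lambda_{\infty}(\Delta^{m}_{G})]^{\alpha}\subseteq[V^\lambda_{\infty}(\Delta^{m}_{G})]^{\alpha}$, and combining the two inclusions yields the equality.

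The only delicate point is the claim that $u(x)$ stays in $V^\lambda_{\infty}(\Delta^{m}_{G})$, i.e. that modifying the first $m$ entries of $x$ alters $\sup_{\eta}\bigl|(e\oslash e^{\lambda_{\eta}})\odot{}_{G}\sum_{k\in I_{n}}\Delta^{m}_{G}x_{i}\bigr|^{G}$ by only a finite amount; this is where the explicit shape of $\Delta^{m}_{G}$ in \eqref{eqem} and the bound $e\oslash e^{\lambda_{\eta}}=e^{(\lambda_{\eta})^{-1}}\le e$ (forced by $\lambda_{1}=1$ and the monotonicity of $(\lambda_{\eta})$) are used. Once this is settled, the rest is the routine ``finite head plus dominated tail'' argument above, and no further machinery is needed.
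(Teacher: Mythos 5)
Your proposal is correct and follows essentially the same route as the paper: the easy inclusion $[V^\lambda_{\infty}(\Delta^{m}_{G})]^{\alpha}\subseteq[uV^\lambda_{\infty}(\Delta^{m}_{G})]^{\alpha}$ from $uV^\lambda_{\infty}(\Delta^{m}_{G})\subseteq V^\lambda_{\infty}(\Delta^{m}_{G})$ and the anti-monotonicity of the $\alpha$-dual, and the reverse inclusion by replacing an arbitrary $x\in V^\lambda_{\infty}(\Delta^{m}_{G})$ with $u(x)=(1,\dots,1,x_{m+1},\dots)$ and splitting the series into a finite head plus a convergent tail. The only difference is that you explicitly justify that $u(x)$ stays in the space, a point the paper asserts without proof.
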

\begin{proof}
$ [uV^\lambda_{\infty}(\Delta^{m}_{G})]^{\alpha} \subset [V^\lambda_{\infty}(\Delta^{m}_{G})]^{\alpha}$, so we have 
\begin{equation}\label{ch22.5.6}
[V^\lambda_{\infty}(\Delta^{m}_{G})]^{\alpha}\subseteq  [uV^\lambda_{\infty}(\Delta^{m}_{G})]^{\alpha}
\end{equation} 
Conversely  let $a=(a_{k}) \in [uV^\lambda_{\infty}(\Delta^{m}_{G})]^{\alpha}$,  $$_{G} \sum_{k=1}^{\infty}| a_{k}\odot x_{i}|^{G} is less than infinite $$ for each $x=(x_{i}) \in uV^\lambda_{\infty}(\Delta^{m}_{G})$. \\
Now take any sequence $x^{\prime}=(x_{i} ^{\prime}) \in V^\lambda_{\infty}(\Delta^{m}_{G})$,  the following sequence $(1,1,1, \cdots,1,x^{\prime}_{m+1},x^{\prime}_{m+2}, \cdots) \in  [uV^\lambda_{\infty}(\Delta^{m}_{G})]^{\alpha}$ and 
 \begin{align*}
_{G}\sum_{k=m+1}^{\infty}  | a_{k} \odot x_{i}^{\prime} |^{G} < \infty.
 \end{align*}
 Now,
 \begin{align*}
& _{G}\sum_{k=1}^{\infty}  | a_{k} \odot x_{i}^{\prime} |^{G}\\
 & = _{G}\sum_{k=1}^{m}  | a_{k} \odot x_{i}^{\prime} |^{G} \oplus  _{G}\sum_{k=m+1}^{\infty}  | a_{k} \odot x_{i}^{\prime} |^{G}
 < \infty
 \end{align*}
 $\forall$ $x=(x_{i}^{\prime}) \in V^\lambda_{\infty}(\Delta^{m}_{G})$. Therefore, the sequence $a=(a_{k}) \in  [V^\lambda_{\infty}(\Delta^{m}_{G})]^{\alpha}$ and hence 
\begin{equation}\label{ch22.5.7}
[uV^\lambda_{\infty}(\Delta^{m}_{G})]^{\alpha} \subseteq [V^\lambda_{\infty}(\Delta^{m}_{G})]^{\alpha} 
\end{equation} 
Hence from equations (\ref{ch22.5.6}) and (\ref{ch22.5.7}) we obtain our required result.
\end{proof}

\end{document}